\newtheorem*{maintheorem}{Main theorem}
\newtheorem{theorem}{Theorem}
\newtheorem{proposition}[theorem]{Proposition}
\newtheorem{definition}[theorem]{Definition}
\newtheorem{lemma}[theorem]{Lemma}
\numberwithin{equation}{section}
\numberwithin{theorem}{section}
\numberwithin{table}{section}
\newcommand{\R}{\mathbb{R}}
\newcommand{\N}{\mathbb{N}}
\newcommand{\Z}{\mathbb{Z}}
\newcommand{\Dx}{{\Delta x}}
\newcommand{\Dt}{{\Delta t}}
\renewcommand{\leq}{\leqslant}
\renewcommand{\geq}{\geqslant}
\renewcommand{\phi}{\varphi}
\newcommand{\Lip}{\mathrm{Lip}}
\newcommand{\DLip}{\mathrm{DLip}}
\newcommand{\define}[1]{\textbf{#1}}
\newcommand{\ind}{\chi}
\newcommand{\hf}{{\unitfrac{1}{2}}}
\renewcommand{\epsilon}{\varepsilon}
\title{Second-order convergence of \\monotone schemes for conservation laws} 
\author{Ulrik S. Fjordholm \and Susanne Solem}
\date{\today}
\begin{document}
\maketitle
\begin{abstract}
We prove that a class of monotone, \emph{$W_1$-contractive} schemes for scalar conservation laws converge at a rate of $\Dx^2$ in the Wasserstein distance ($W_1$-distance), whenever the initial data is decreasing and consists of a finite number of piecewise constants. It is shown that the Lax--Friedrichs, Enquist--Osher and Godunov schemes are $W_1$-contractive. Numerical experiments are presented to illustrate the main result. To the best of our knowledge, this is the first proof of second-order convergence of any numerical method for discontinuous solutions of nonlinear conservation laws.
\end{abstract}

{ \small \textbf{Keywords.}  Hyperbolic conservation laws, numerical methods, convergence rate, Wasserstein metric. }

{ \small \textbf{Mathematics Subject Classification.} 65M08, 65M12 }

\section{Introduction}
Motivated by numerical results, we show that the class of so-called \emph{$W_1$-contractive}, monotone finite volume schemes for the scalar conservation law 
\begin{equation}\label{eq:cons_law}
\begin{split}
u_t +f(u)_x = 0,& \qquad x \in \R, \ t >0, \\
u(x,0) = u_0(x)&
\end{split}
\end{equation}
with \emph{decreasing} and \emph{piecewise constant} initial data $u_0(x)$, will converge to the entropy solution with a second-order convergence rate in the Wasserstein distance $W_1$. 

\begin{maintheorem}
Let $f$ be convex and let $u_0$ be piecewise constant and decreasing. Then any monotone $W_1$-contractive finite volume scheme will converge to the exact solution of \eqref{eq:cons_law} at a rate of $\Dx^2$, as measured in the Wasserstein distance.
\end{maintheorem}

\noindent
The full theorem is stated in Section \ref{sec:mainthm}. As is well known, the entropy solution for this type of initial data will solely consist of shocks moving at constant speeds. Thus, the entropy solutions considered in this work constitutes a simple, but fundamental class of solutions for \eqref{eq:cons_law}.

\subsection{The Wasserstein and $\Lip'$ distances}
The Wasserstein distance $W_1$ (also called the Kantorovich--Rubinstein metric) is a metric on the set of probability measures on $\R$ (see \cite{Vil03} for further details), and can be thought of as measuring the amount of work required to ``move mass'' from one probability measure to another; see Figure \ref{fig:wasserstein}(a). According to the Kantorovich--Rubinstein duality theorem (see Rachev and Shortt \cite[Theorem 2.6]{rachev_90}), the Wasserstein distance between two probability measures $\mu$ and $\nu$ on $\R$ can equivalently be defined as
\begin{equation}\label{eq:w1norm}
W_1(\mu,\nu) := \sup_{\|\phi\|_\Lip\leq 1} \int_{\R} \phi(x)\ d(\mu-\nu)(x).
\end{equation}
Here, the supremum is taken over all functions $\phi:\R\to\R$ with Lipschitz semi-norm $\|\phi\|_\Lip := \sup_{x\neq y}\Bigl|\frac{\phi(y)-\phi(x)}{y-x}\Bigr|$ at most 1. Although normally only defined for \emph{probability} measures, the right-hand side of \eqref{eq:w1norm} is well-defined and finite as long as the difference $\mu-\nu$ has mass $(\mu-\nu)(\R)=0$, and decays sufficiently fast as $x\to\pm\infty$. Given Borel measurable functions $u,v:\R\to\R$ satisfying the analogous properties
\begin{subequations}\label{eq:wasserdef}
\begin{equation}\label{eq:wassercondition}
\int_\R (u-v)(x)\ dx = 0, \qquad \int_\R |x|\, |u-v|(x)\ dx < \infty
\end{equation}
we can define their \emph{Wasserstein distance} (also called \emph{Lip$'$-norm}) as
\begin{equation}\label{eq:lipdualdef}
W_1(u,v) := \sup_{\| \phi\|_{\Lip}\leq 1} \int_{\R} \phi(x)(u-v)(x)\ dx.
\end{equation}
\end{subequations}

The Wasserstein metric seems particularly suitable for comparing (approximate) solutions of conservation laws. Given an exact solution $u$ and an approximate solution $u_\Dx$ of \eqref{eq:cons_law}, the difference $u-u_\Dx$ has zero mass as long as the numerical scheme is conservative, and decays sufficiently fast under mild assumptions on the numerical scheme. Thus, the Wasserstein error $W_1(u,u_\Dx)$ will be well-defined and finite.
\begin{figure}
\centering
\subfigure[$W_1$-distance between two measures.]{\includegraphics[width=0.47\textwidth]{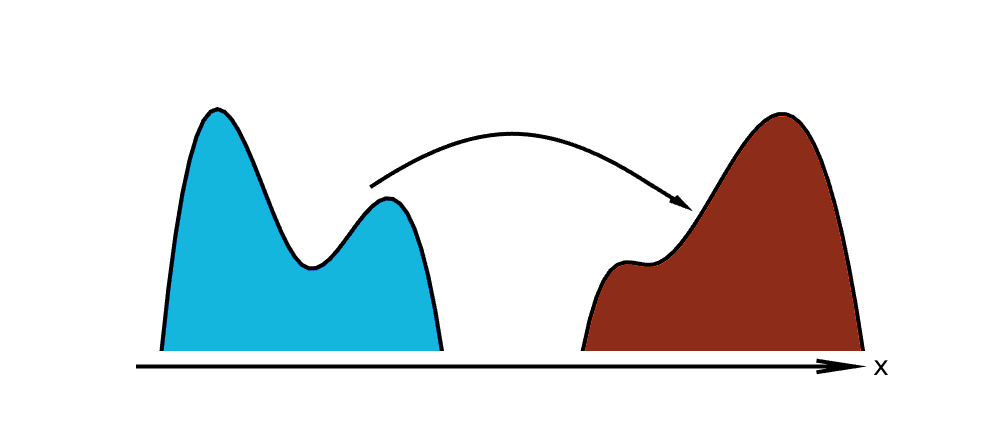}}
\subfigure[$W_1$-distance between exact and approximate solution of \eqref{eq:cons_law}.]{\includegraphics[width=0.47\textwidth]{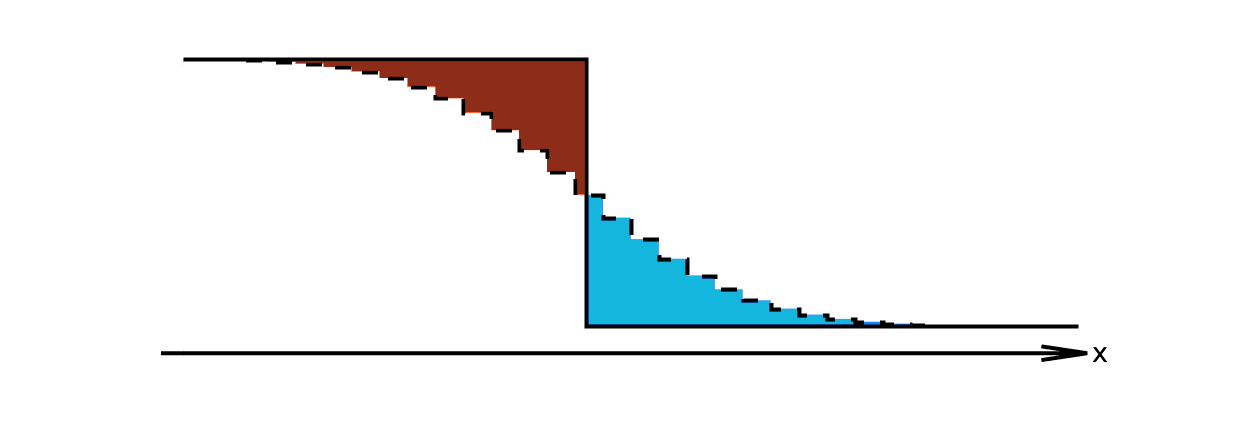}\hspace{4ex}}
\caption{The Wasserstein distance measures the amount of work (mass $\times$ distance) required to move mass from one place (light blue) to another (dark red).}
\label{fig:wasserstein}
\end{figure}

In the works of Tadmor et al.\ \cite{Tad91,nessy_conv92,nessy_conv}, the above metric \eqref{eq:lipdualdef} was studied extensively in the context of conservation laws, but under the different name of the \emph{$\Lip'$-norm}. They showed that a large class of monotone finite difference methods converge at a rate of $\Dx$ in the $\Lip'$-norm for \emph{any} initial data $u_0$. Thus, our result can be seen as an improvement over these earlier results for particular types of initial data, namely those whose solutions consist of shocks separated by constant states. 

In \cite{nessy_conv} it was argued that for initial data \emph{with a smooth solution}, the solution computed by a (formally) second-order TVD method converges at a rate of $\Dx^2$, as measured in the Wasserstein ($\Lip'$) norm. Although numerical experiments indicate that (formally) second-order TVD schemes \emph{always} converge at a rate of $\Dx^2$, no proof to this end is currently available. The present work can be seen as a step in this direction.

\subsection{Wasserstein and $L^1$ convergence}
Apart from convergence results in the $\Lip'$-norm, the only generic result on convergence rates of numerical schemes for conservation laws \eqref{eq:cons_law} is the $O(\Dx^\hf)$ rate in the $L^1$ norm, due to Kuznetsov \cite{Kuz76}. This was improved to $O(\Dx)$ by Teng and Zhang \cite{zhang_teng} for the particular case of piecewise constant entropy solutions. 

To motivate why the Wasserstein distance (or, equivalently, the $\Lip'$-norm) is more appropriate than the $L^1$ norm in the context of conservation laws, consider Figure \ref{fig:wasserstein}(b), which shows an exact solution of \eqref{eq:cons_law} containing a shock (solid curve), along with a typical numerical approximation (dashed curve). The $L^1$ distance $\|u_\Dx-u\|_{L^1(\R)}$ measures the area between the two graphs, indicated by dark red and light blue. The height of this area is $O(1)$ and the width $O(\Dx)$, so the $L^1$ norm will be $O(\Dx)$. The Wasserstein distance $W_1(u_\Dx,u)$, on the other hand, measures the amount of work (mass $\times$ distance) that goes into moving the surplus of mass in $u_\Dx$ (indicated in light blue) to behind the shock, where there is a shortage of mass (indicated in dark red). The area of mass that needs to be moved is $O(\Dx)$, and the distance between the blue and red areas is $O(\Dx)$, so the Wasserstein distance will be $O(\Dx\cdot \Dx) = O(\Dx^2)$.

Numerical evidence indicates that this difference between the $L^1$ and $W_1$ distances is \emph{generic}, in that any finite volume method will always be at most $O(\Dx)$ in $L^1$, and at most $O(\Dx^2)$ in $W_1$, in the presence of shocks. Intuitively we see this from the fact that even very high-order methods have a small amount of smearing near shocks, and therefore the approximate solution will be of the form indicated in (the somewhat exaggerated) Figure \ref{fig:wasserstein}(b). Note, however, that in smooth (but non-constant) regions of the solution, the accuracy of monotone methods degenerates to $O(\Dx)$---both in $L^1$ and $W_1$---as is to be expected.

\subsection{Background and outline of the paper}
The first monotone finite difference and finite volume methods for scalar conservation laws were developed by Lax, Godunov and others in the 1950s. The first generic result on convergence rates was the $O(\Dx^\hf)$ estimate in $L^1(\R)$ published by Kuznetsov in 1975--1976 \cite{Kuz76}. This approach was further developed in 1996 by Cockburn and Gremaud \cite{CG96}. A (pathological) counterexample due to \c Sabac (1997) shows that the $\Dx^\hf$ rate is sharp and cannot be improved without further assumptions on the initial data \cite{Sab97}. Numerical evidence indicates that the convergence rate is in fact higher---usually somewhere strictly between $\hf$ and 1---for more ``natural'', non-pathological initial data. This was confirmed in 1997 by Teng and Zhang~\cite{zhang_teng}, who proved $O(\Dx)$ convergence in $L^1(\R)$ in the particular case of piecewise constant data with only shocks, which is the setting considered in the present paper. Our approach follows that of Teng and Zhang, although differing in certain important aspects.

A different approach to obtaining convergence rates was initiated in Nessyahu and Tadmor's 1992 paper \cite{nessy_conv92}. Utilizing the dual equation studied by Tadmor in \cite{Tad91}, the authors proved that several finite volume schemes such as the Lax--Friedrichs, Engquist--Osher and Godunov schemes converge at a rate of $O(\Dx)$ in the $\Lip'$-norm, for arbitrary $\Lip^+$-bounded\footnote{A function $u_0:\R\to\R$ is $\Lip^+$-bounded if $\|u_0\|_{\Lip^+} := \sup_{x\neq y}\left(\frac{u_0(y)-u_0(x)}{y-x}\right)^+ < \infty$. Simply put, $u_0$ can have negative but not positive jump discontinuities.} initial data (see also \cite{nessy_conv}). Nessyahu and Tassa \cite{ness93} extended the results to  approximations $u^{\epsilon}$ with $\Lip^+$-unbounded initial data, which were shown to have a $\Lip'$-convergence rate of $O(\epsilon |\ln \epsilon|)$. As argued in the previous section, we believe that the $\Lip'$-norm (equivalently, the Wasserstein distance) plays an important role in 
 the context of numerics for conservation laws and deserves to be revisited.

Next follows an outline of the present paper. In Section \ref{sec:setting} we describe the context of the Main Theorem and restate the theorem in more precise language. The rest of the paper is dedicated to the proof of the Main Theorem. We start by proving a $W_1$-stability estimate for monotone finite volume methods using a duality technique in Section \ref{sec:dual_stability}. In Section \ref{sec:trav_wave} we prove the Main Theorem in the case of a \emph{single} initial jump. In Section \ref{sec:multiple_jumps} we first prove the Main Theorem for an arbitrary number of shocks up until the first shock interaction time (Lemma \ref{lem:kshockerror}), and then in Section \ref{sec:after_interaction} we conclude the proof of the Main Theorem using induction on the number of shock interactions. In Section \ref{sec:numerical} we present numerical results to illustrate our main theorem. Finally, we end with some concluding remarks in Section \ref{sec:conclusion}.

We remark that although we only consider 3-point finite volume schemes, our proof works for any $(2p+1)$-point scheme. For notational convenience we only consider the former.

\section{Precise statement of main theorem}\label{sec:setting}
\subsection{The initial data and entropy solution}
We consider a decreasing and piecewise constant function $u_0:\R\to\R$, which can be written as
\begin{equation}\label{eq:initial_cond}
u_0(x) =\begin{cases}
    u^{(0)}       & \text{if } x<x^{1}\\
    u^{(k)}       & \text{if } x^{k}\leq x < x^{k+1}, \quad k=1,\dots,K-1 \\
    u^{(K)}       & \text{if } x^{K}\leq x,
  \end{cases}
\end{equation}
where $u^{(0)} > u^{(1)} > \dots >u^{(K)}$ and $x^1 < x^2 < \dots < x^K$. Here, $K$ is a finite number of jump discontinuities. If the flux function $f$ is convex then it is well-known that the entropy solution of \eqref{eq:cons_law} is
\begin{equation}\label{eq:acc_solution}
u(x,t) =\begin{cases}
    u^{(0)}       & \text{if } x<X^{1}(t) \\
    u^{(k)}       & \text{if } X^{k}(t) \leq x < X^{k+1}(t), \quad k=1,\dots,K-1 \\ 
    u^{(K)}       & \text{if } X^{K}(t)\leq x,
  \end{cases}
\end{equation}
where 
\begin{equation}\label{eq:characteristics}
X^{k}(t) = x^{k} + D^kt, \qquad D^k = \frac{f(u^{(k-1)})-f(u^{(k)})}{u^{(k-1)}-u^{(k)}}
\end{equation}
up to the first shock interaction time $t_{(1)}$,
\begin{align*}
t_{(1)} = \min_{2 \leq k \leq K} \frac{x^{k}-x^{k-1}}{D^{k-1}-D^k}.
\end{align*} 
After $t_{(1)}$ we can construct the solution $u(x,t)$ by finding the entropy solution to \eqref{eq:cons_law} with initial data $u(x,t_{(1)})$ with $K-1$ (or fewer) shocks and then continue this approach (at most) $K-2$ times. 

For simplicity we will denote the case of a single initial shock by
\begin{equation}\label{eq:heaviside}
H^{(k)}(x) = \begin{cases}
u^{(k-1)} & \text{if } x \leq 0 \\ 
u^{(k)} & \text{if } x > 0.
\end{cases} 
\end{equation}
Clearly, the entropy solution with initial data $H^{(k)}(x-x^k)$ is given by the \emph{traveling wave}
\begin{equation}\label{eq:travwave}
u(x,t) = H^{(k)}(x-X^k(t)).
\end{equation}

\subsection{Monotone schemes and discrete shocks}
We discretize the space-time domain $\R\times\R_+$ as $x_{i-\hf} = (i-\hf)\Dx$ and $t^n = n\Dt$ for $\Dx, \Dt>0$. We will denote $\lambda=\Dt /\Dx$. The exact solution $u$ of \eqref{eq:cons_law} is approximated in each cell $I_i=[x_{i-\hf},x_{i+\hf})$ by 
\[u_i^n \approx \frac{1}{\Dx}\int_{I_i} u(x,t^n) dx.
\]
The initial data is set as
$u_i^0 = \frac{1}{\Dx}\int_{I_i} u_0(x) dx.$ A numerical scheme of the form 
\[
u_i^{n+1} = G(u_{i-1}^n, u_i^n, u_{i+1}^n), \qquad i\in\Z
\]
is \emph{monotone} if $G$ is nondecreasing in all three arguments, and is \emph{conservative} if $\sum_i u_i^{n+1} = \sum_i u_i^n$ for all $n$. It is straightforward to show \cite[Proposition 1.1]{GR91} that the scheme is monotone if and only if there is a \emph{numerical flux function} $F(\cdot,\cdot)$ which is increasing in the first argument and decreasing in the second, such that
\begin{equation}\label{eq:num_met}
\begin{split}
u_i^{n+1} = u_i^{n}-\lambda \left(F(u_i^{n},u_{i+1}^{n}) - F(u_{i-1}^{n},u_{i}^{n})\right), \\
u_i^0 = \frac{1}{\Dx}\int_{I_i} u_0(x) dx,
\end{split}
\qquad\qquad (i\in\Z)
\end{equation}
and a certain CFL condition is satisfied. The scheme is \emph{consistent} if $G(u,u,u) = u$ for all $u\in\R$, or equivalently, if $F(u,u) = f(u)$ for all $u\in\R$. 

Given a numerical solution $u_i^n$ computed by \eqref{eq:num_met}, we extend it to all $(x,t) \in \R\times[0,\infty)$ by setting
\begin{equation}\label{eq:gridextension}
u_\Dx(x,t)=u_i^n \qquad \text{for } (x,t) \in I_i \times [t^n,t^{n+1}).
\end{equation}
It is clear that $u_\Dx$ satisfies for all $(x,t)\in\R\times\R_+$
\begin{equation}\label{eq:nummetphys}
u_\Dx(x,t+\Dt) =  u_\Dx(x,t) -\lambda\Bigl[F\bigl(u_\Dx(x,t),\ u_\Dx(x+\Dx,t)\bigr) - F\bigl(u_\Dx(x-\Dx,t),\ u_\Dx(x,t)\bigr)\Bigr],
\end{equation}
or written in terms of the normalized function $U(\xi,\eta) := u_\Dx(\xi\Dx, \eta\Dt)$,
\[
U(\xi,\eta+1) =  U(\xi,\eta) -\lambda\Bigl[F\bigl(U(\xi,\eta),\ U(\xi+1,\eta)\bigr) - F\bigl(U(\xi-1,\eta),\ U(\xi,\eta)\bigr)\Bigr].
\]

Analogous to the traveling wave solution \eqref{eq:travwave}, we ask whether there exist numerical solutions satisfying $u_\Dx(x,t+\Dt) = u_\Dx(x-D^k\Dt,t)$ for all $(x,t)$, or equivalently, $U(\xi,\eta+1) = U(\xi-D^k\lambda,\eta)$ for all $(\xi,\eta)$. 
\begin{definition}
A \define{discrete shock} for \eqref{eq:num_met} connecting $u^{(k-1)}$ and $u^{(k)}$ is a function $U^{(k)}:\R\to\R$ satisfying
\begin{equation}\label{eq:trav_wave_eq}
U^{(k)}(\xi-D^k\lambda) =  U^{(k)}(\xi) -\lambda \Bigl[F\bigl(U^{(k)}(\xi),U^{(k)}(\xi+1)\bigr) - F\bigl(U^{(k)}(\xi-1),U^{(k)}(\xi)\bigr)\Bigr] \quad \forall\ \xi\in\R
\end{equation}
and $\displaystyle\lim_{\xi\to-\infty} U^{(k)}(\xi) = u^{(k-1)}$, $\displaystyle\lim_{\xi\to+\infty} U^{(k)}(\xi) = u^{(k)}$. 
\end{definition}

Note that \eqref{eq:trav_wave_eq} does not depend on $\Dx$ or $\Dt$, only on their ratio $\lambda = \frac{\Dt}{\Dx}$. Thus, any statement about discrete shocks (such as the existence Theorem \ref{thm:disctravwave}) will be independent of the mesh size.
The existence of discrete shocks for \eqref{eq:num_met}---essential in the proof of our main result---has been proven in several important cases. The first existence result was given by Jennings \cite{jennings_74}, who proved the existence of a discrete shock for \eqref{eq:num_met} provided $D^k\lambda$ is rational, that the numerical flux $F$ is differentiable and the scheme is strictly monotone (i.e., $F$ is \emph{strictly} increasing/decreasing in the first/second argument). Enquist and Osher \cite{enquist81} showed existence for general monotone schemes with a differentiable flux. The existence of discrete shocks, for both $D^k\lambda$ rational and irrational, has been proven for the Godunov scheme by Fan \cite{fan_98}. Serre \cite{serre_04, serre_07} proved existence of discrete shocks for (both strict and non-strict) monotone schemes, including the Godunov scheme, for $D^k\lambda$ irrational as well. Assuming $D^k\lambda$ rational, Fan \cite{fan00} established existence of discrete shocks for, in addition to the schemes mentioned above, second-order MUSCL schemes.

We summarize these results as follows:
\begin{theorem}\label{thm:disctravwave}
Let $u^{(k)}<u^{(k-1)}$ and $f$ in \eqref{eq:cons_law} be convex. 
Assume that the conservative finite volume method \eqref{eq:num_met} is monotone and that $F$ is Lipschitz continuous in both arguments. Then for every value $u^*\in\bigl(u^{(k)}, u^{(k-1)}\bigr)$ there exists a unique Lipschitz continuous discrete shock for \eqref{eq:num_met} connecting $u^{(k-1)}$ and $u^{(k)}$, satisfying
\begin{equation}
U^{(k)}(0) = u^*,
\end{equation}
and the point values $U^{(k)}(\xi)$ depend continuously on the choice of $u^*$. Moreover, there are constants $\alpha_k, \beta_k > 0$ such that
\begin{subequations}\label{eq:exponentialdecay}
\begin{align}
\Bigl|U^{(k)}(\xi) - u^{(k-1)}\Bigr| \leq \beta_k e^{-\alpha_k|\xi|}& &&\forall\ \xi \leq 2, \label{eq:exponentialdecayA} \\
\Bigl|U^{(k)}(\xi) - u^{(k)}\Bigr| \leq \beta_k e^{-\alpha_k|\xi|}& &&\forall\ \xi \geq -2, \label{eq:exponentialdecayB}
\end{align}
\end{subequations}
and furthermore,
\begin{equation}\label{eq:jump_condition}
\sum_{i\in\Z} \left(U^{(k)}(i+\zeta) - U^{(k)}(i) \right) = \zeta \bigl(u^{(k-1)}-u^{(k)}\bigr) \qquad \forall \ \zeta \in \R.
\end{equation}
\end{theorem}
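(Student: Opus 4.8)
The plan is to recast \eqref{eq:trav_wave_eq} as a fixed-point problem in the frame moving with the shock. Writing $c := D^k\lambda$, letting $(\mathcal{T}U)(\xi) := U(\xi) - \lambda\left(F(U(\xi),U(\xi+1)) - F(U(\xi-1),U(\xi))\right)$ denote the one-step scheme operator and $(\tau_c U)(\xi) := U(\xi - c)$ the shift, equation \eqref{eq:trav_wave_eq} is exactly $\mathcal{T}U = \tau_c U$; that is, $U$ is a fixed point of $\mathcal{G} := \tau_{-c}\circ\mathcal{T}$. Since the scheme is monotone with $F$ Lipschitz and obeys the CFL condition, $\mathcal{G}$ is order-preserving, commutes with translations, and maps bounded monotone-decreasing profiles to bounded monotone-decreasing profiles. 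I would obtain a monotone fixed point connecting $u^{(k-1)}$ to $u^{(k)}$ by iterating $\mathcal{G}$ from the Heaviside profile $H^{(k)}$ and passing to the monotone limit; this is precisely the content of the existence results of Jennings, Engquist--Osher, Fan and Serre cited above, which together cover both rational and irrational $c$. Because \eqref{eq:trav_wave_eq} involves only $\lambda$, the profile is mesh-independent. This existence step, genuinely delicate when $c$ is irrational, is the deepest ingredient, and I would import rather than reprove it.

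Granting a monotone fixed point, I would first argue that it is continuous and \emph{strictly} decreasing --- strictness follows because a profile that is constant on an interval would, by \eqref{eq:trav_wave_eq} and a maximum-principle argument, be globally constant, contradicting the distinct end states. Hence $U^{(k)}$ is a homeomorphism of $\R$ onto $(u^{(k)}, u^{(k-1)})$, so for each $u^* \in (u^{(k)}, u^{(k-1)})$ exactly one translate satisfies $U^{(k)}(0) = u^*$. Uniqueness then reduces to the rigidity statement that any two monotone discrete shocks with the same end states are translates of one another; I would prove this by a discrete sliding argument, taking the extremal shift $s^* = \inf\{s : U_1(\cdot) \le U_2(\cdot - s)\}$ and using order-preservation and strict monotonicity of $\mathcal{G}$ to force equality at $s = s^*$. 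Continuous dependence of the point values on $u^*$ is then automatic: varying $u^*$ merely re-selects the translate, and continuity of the (strictly monotone) profile makes the selected shift depend continuously on $u^*$. Lipschitz continuity I would extract from the fixed-point equation together with the total-variation bound $\sum_{i} \left|U^{(k)}(i+h) - U^{(k)}(i)\right| \le u^{(k-1)} - u^{(k)}$ coming from monotonicity, bootstrapping a uniform modulus through the Lipschitz dependence of $F$.

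The exponential decay \eqref{eq:exponentialdecay} is where I expect the main difficulty. Setting $U^{(k)} = u^{(k)} + v$ near $\xi \to +\infty$ and linearizing \eqref{eq:trav_wave_eq} gives the constant-coefficient difference equation $v(\xi - c) = (1 - \lambda(a-b))\,v(\xi) - \lambda b\,v(\xi+1) + \lambda a\,v(\xi-1)$, where $a := F_1(u^{(k)},u^{(k)}) \ge 0$ and $b := F_2(u^{(k)},u^{(k)}) \le 0$ satisfy $a + b = f'(u^{(k)})$ by consistency. Inserting $v(\xi) = z^{\xi}$ yields a dispersion relation whose roots must be located relative to the unit circle. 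The Lax condition $f'(u^{(k)}) < D^k < f'(u^{(k-1)})$, which holds here because $f$ is convex and the shock is genuine, makes the characteristics compressive in the moving frame and is exactly what produces a spectral gap, forcing the bounded admissible solution to decay like $e^{-\alpha_k|\xi|}$; the analysis at $\xi \to -\infty$ is identical, and monotonicity makes the decay one-signed, with the prefactors $\beta_k$ fixed by matching on the window $|\xi| \le 2$. Controlling the transcendental dispersion relation for irrational $c$ is the crux; alternatively one can sidestep the spectral computation by showing directly, from the CFL and Lax conditions, that $\mathcal{G}$ is a strict contraction on the tails.

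Finally, the identity \eqref{eq:jump_condition} I would prove by summation. From \eqref{eq:trav_wave_eq} we have $U^{(k)}(\xi) - U^{(k)}(\xi - c) = \lambda\left(\mathcal{F}(\xi) - \mathcal{F}(\xi - 1)\right)$ with $\mathcal{F}(\xi) := F(U^{(k)}(\xi), U^{(k)}(\xi+1))$; summing over $\xi = i + s$, $i \in \Z$, telescopes the flux to $\lambda\left(f(u^{(k)}) - f(u^{(k-1)})\right) = -c\,(u^{(k-1)} - u^{(k)})$ by the definition \eqref{eq:characteristics} of $D^k$, the exponential decay \eqref{eq:exponentialdecay} guaranteeing absolute convergence. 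Writing $\Phi(\zeta) := \sum_{i}\left(U^{(k)}(i+\zeta) - U^{(k)}(i)\right)$, this says $\Phi(s) - \Phi(s - c) = -c\,(u^{(k-1)} - u^{(k)})$, while a direct telescoping over unit shifts gives the unconditional relation $\Phi(\zeta + 1) - \Phi(\zeta) = u^{(k)} - u^{(k-1)}$ for every real $\zeta$. Consequently the residual $\Phi(\zeta) - \zeta\,(u^{(k)} - u^{(k-1)})$ is periodic with both periods $1$ and $c$ and vanishes at the integers; continuity of $\Phi$ then forces it to vanish identically (immediately for irrational $c$, and via the lattice $\tfrac{1}{q}\Z$ when $c = p/q$), establishing the affine dependence on $\zeta$ recorded in \eqref{eq:jump_condition}.
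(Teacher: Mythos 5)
Your decision to import the existence statement from Jennings, Enquist--Osher, Fan and Serre is not a weakness: the paper's own ``proof'' of this theorem is exactly that citation and nothing more, so on the existence step you and the paper take the same route. The problems lie in the two places where you go beyond the paper and argue from scratch. The most concrete gap is your derivation of \eqref{eq:jump_condition} in the rational case. Your two relations --- $\Phi(\zeta+1)-\Phi(\zeta)=u^{(k)}-u^{(k-1)}$ from unit telescoping and $\Phi(s)-\Phi(s-c)=c\,(u^{(k)}-u^{(k-1)})$ from summing \eqref{eq:trav_wave_eq} --- are correct, but when $c=p/q$ the residual has period group $\frac{1}{q}\Z$ only, which is not dense, and ``vanishes on a lattice plus continuity'' concludes nothing off the lattice. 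This is not a repairable technicality: for rational $c$, equation \eqref{eq:trav_wave_eq} couples the values of $U^{(k)}$ only within each coset $\xi_0+\frac{1}{q}\Z$, so distinct cosets are completely decoupled and can be filled with genuinely different solutions. The identity \eqref{eq:jump_condition} is therefore \emph{not} a consequence of the defining equation for rational $c$; it is a normalization satisfied by the particular profiles constructed in the cited literature, and has to be imported together with existence.

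A concrete counterexample shows this and also breaks your uniqueness argument. Take the Godunov scheme and a stationary shock, i.e.\ $f(u^{(k-1)})=f(u^{(k)})$ so that $c=0$ (rational). Then \eqref{eq:trav_wave_eq} reduces to the requirement that $F\bigl(U(\xi),U(\xi+1)\bigr)=\max\bigl(f(U(\xi)),f(U(\xi+1))\bigr)$ be constant in $\xi$, and \emph{any} monotone Lipschitz profile equal to $u^{(k-1)}$ and $u^{(k)}$ outside a transition interval of length at most $1$ satisfies it. Profiles with transition length $1$ obey the (sign-corrected) mass identity, while profiles with transition length $\frac12$ violate it; moreover, several such profiles pass through the same value $u^*$ at $\xi=0$ without being translates of one another. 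So for merely monotone schemes (which is all the theorem assumes) your rigidity claim ``constant on an interval implies globally constant'' is false, the profile need not be strictly decreasing or a homeomorphism, and the sliding argument cannot deliver uniqueness --- these claims, too, require either strict monotonicity or the additional selection built into the references. Finally, note the sign: your telescoping correctly yields $\Phi(\zeta)=\zeta\,(u^{(k)}-u^{(k-1)})$, which is the \emph{negative} of \eqref{eq:jump_condition} as printed (for a decreasing profile and $\zeta>0$ the sum is manifestly nonpositive). You have in fact uncovered a sign typo in the paper --- its later use of the identity in Section 4 agrees with your sign --- but your write-up asserts that the computation ``establishes'' \eqref{eq:jump_condition} as stated, which it does not.
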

We refer to \cite{jennings_74,enquist81,fan_98,serre_04,serre_07} for the proof.

\subsection{The discrete Wasserstein distance and $W_1$-contractivity}
Before stating the main theorem, we need to define $W_1$-contractivity and a discrete version of the Wasserstein distance. 
\begin{definition}
For functions with piecewise constant values $u_\Dx(x)=\sum_i u_i\ind_{I_i}(x)$ and $v_\Dx(x)=\sum_i v_i\ind_{I_i}(x)$ satisfying
\begin{subequations}\label{eq:dwasserdef}
\begin{equation}\label{eq:dwassercondition}
\sum_{i\in\Z} u_i - v_i = 0, \qquad \sum_{i\in\Z} |i| |u_i-v_i| < \infty,
\end{equation}
we define their \emph{discrete Wasserstein distance} (or \emph{$\DLip'$-distance}) as
\begin{equation}\label{eq:dlipdualdef}
W_{1,D}(u_\Dx,v_\Dx):=\sup_{\|\phi\|_\DLip\leq1} \sum_i \phi_i (u_i-v_i)\Dx.
\end{equation}
\end{subequations}
\end{definition}
Note that the condition \eqref{eq:dwassercondition} is equivalent to \eqref{eq:wassercondition} for piecewise constant functions. The supremum in \eqref{eq:dlipdualdef} is taken over all grid functions $\phi(x)=\sum_i \phi_i\ind_{I_i}(x)$ with
\[
\|\phi\|_\DLip := \sup_{i\in\Z} \left|\frac{\phi_{i+1}-\phi_i}{\Dx}\right| \leq 1.
\]
\begin{definition}
We say that the monotone scheme \eqref{eq:num_met} is \emph{$W_1$-contractive} if the following holds.
Let $u_\Dx(x,t)$ and $v_{\Dx}(x,t)$ be computed by the inhomogeneous schemes
\begin{equation}\label{eq:inhom_num}
\begin{cases}
\begin{split}
u_i^{n+1}&= u_i^{n}-\lambda \Bigl(F(u_i^{n},u_{i+1}^{n}) - F(u_{i-1}^{n},u_{i}^{n})\Bigr) + h_i^n\Dt, &\quad u_i^0 = \frac{1}{\Dx}\int_{I_i} u_0(x)\ dx, \\
v_i^{n+1}&=v_i^{n}-\lambda \Bigl(F(v_i^{n},v_{i+1}^{n}) - F(v_{i-1}^{n},v_{i}^{n})\Bigr) + g_i^n\Dt, &\quad v_i^0 = \frac{1}{\Dx}\int_{I_i} v_0(x)\ dx,
\end{split}
\end{cases}
\end{equation}
where $u_0$ and $v_0$ are decreasing and $h$ and $g$ are such that the difference $h^n-g^n$ satisfies \eqref{eq:dwassercondition} for every $n$. Then
\begin{equation}\label{eq:wass_err_nonhom}
\begin{split}
W_1\bigl(u_\Dx(t^N), v_{\Dx}(t^N)\bigr) &\leq W_{1,D}\bigl(u_\Dx(0), v_\Dx(0)\bigr) + \Dt \sum_{n=0}^{N-1} W_{1,D}\bigl(h^n, g^n\bigr) 
\end{split}
\end{equation}
for all $N\in\N$.
\end{definition}

\subsection{Statement of the main theorem}\label{sec:mainthm}
Using the existence of discrete shocks, we will prove the following result.

\begin{theorem}[Main result]\label{thm:main}
Let $f$ be convex and let $u_0$ be of the form \eqref{eq:initial_cond}. Let $u$ be the exact solution \eqref{eq:acc_solution} and let $u_\Dx$ be the numerical solution computed by the $W_1$-contractive, monotone scheme \eqref{eq:num_met}. Then there is a constant $C>0$ depending on $K$ and the size of the jumps (but not on $\Dx$) such that
\begin{equation}\label{eq:main_conv_res}
W_1\bigl(u(\cdot,t^n),u_\Dx(\cdot,t^n)\bigr) \leq C\Dx^2 \qquad \forall\ n\in\N.
\end{equation}
\end{theorem}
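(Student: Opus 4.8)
The plan is to reduce the theorem to the analysis of a \emph{single} traveling wave and then propagate the estimate through shock collisions by induction, exactly as announced in the outline. The single most useful tool is the reformulation of the Wasserstein distance in one dimension through primitives: writing $P[u-v](x) = \int_{-\infty}^x (u-v)(y)\,dy$ for functions satisfying \eqref{eq:wassercondition}, one has the identity $W_1(u,v) = \|P[u-v]\|_{L^1(\R)}$. I would establish this first, since it follows from \eqref{eq:lipdualdef} by integration by parts (the boundary terms vanish by the zero-mass condition, and the optimal $\phi$ has derivative $-\mathrm{sign}(P[u-v])$). The payoff is that the target rate becomes a statement about an $L^1$ norm of a function that is $O(1)$ in amplitude but $O(\Dx)$ in support, so that its primitive is $O(\Dx)$ on a set of width $O(\Dx)$ and hence $O(\Dx^2)$ in $L^1$. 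The entire difficulty then reduces to bookkeeping support and amplitude, and to guaranteeing the zero-mass condition so that $P$ decays at $\pm\infty$.

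\emph{Single shock.} Fix one jump connecting $u^{(k-1)}$ and $u^{(k)}$; the exact solution is the traveling wave \eqref{eq:travwave}. By Theorem~\ref{thm:disctravwave} there is a discrete shock $U^{(k)}$, which I would use to build an \emph{exact} numerical traveling wave $\bar u_\Dx(x,t) := U^{(k)}\bigl((x-X^k(t))/\Dx\bigr)$, sampled at cell centers and extended as in \eqref{eq:gridextension}; by \eqref{eq:trav_wave_eq} it solves the homogeneous scheme. The key point is to fix the free parameter $u^\ast = U^{(k)}(0)$ --- equivalently, the sub-cell position of the profile --- using the continuity of the point values in $u^\ast$ from Theorem~\ref{thm:disctravwave}, so that the total mass of $\bar u_\Dx(\cdot,0)$ equals that of the cell-averaged data $u_\Dx(\cdot,0)$. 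This matching is not optional: without it the difference carries $O(\Dx)$ net mass and $W_1$ is infinite. Once mass is matched, the triangle inequality splits the error as
\[
W_1\bigl(u,u_\Dx\bigr) \leq W_1\bigl(u,\bar u_\Dx\bigr) + W_1\bigl(\bar u_\Dx, u_\Dx\bigr).
\]
For the first term, the exponential decay \eqref{eq:exponentialdecay} and the jump relation \eqref{eq:jump_condition} show that $u-\bar u_\Dx$ has zero mass and, up to exponentially small tails, is concentrated on $O(1)$ cells, i.e.\ amplitude $O(1)$ on width $O(\Dx)$; its primitive is then $O(\Dx)$ on width $O(\Dx)$, giving $O(\Dx^2)$, and this term is constant in time since both functions travel rigidly at speed $D^k$. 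For the second term, both $u_\Dx$ and $\bar u_\Dx$ solve the homogeneous scheme, so $W_1$-contractivity gives $W_1(\bar u_\Dx(t^N),u_\Dx(t^N)) \leq W_{1,D}(\bar u_\Dx(0),u_\Dx(0))$, and the initial discrete distance is again $O(\Dx^2)$ because cell averaging and the discrete profile resolve the same step to $O(\Dx^2)$.

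\emph{Multiple shocks before the first interaction (Lemma~\ref{lem:kshockerror}).} For $t<t_{(1)}$ the exact solution is a superposition of rigidly traveling, non-overlapping steps. I would assemble a composite reference $\bar u_\Dx$ by patching the $K$ individual discrete shocks, each centered as above. Because the shocks are separated by an $O(1)$ distance while the profiles relax exponentially on the scale of one cell by \eqref{eq:exponentialdecay}, the patched function fails to solve the scheme only by a residual $h^n$ supported in the constant regions between shocks, with $W_{1,D}(h^n,0)$ exponentially small, of size $O(e^{-c/\Dx})$. Feeding this into the inhomogeneous $W_1$-contractivity \eqref{eq:wass_err_nonhom} controls $W_1(u_\Dx,\bar u_\Dx)$ up to an exponentially small term, and summing the $K$ single-shock estimates for $W_1(u,\bar u_\Dx)$ yields $O(K\Dx^2)$. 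Since $e^{-c/\Dx}\ll\Dx^2$, the total is $O(K\Dx^2)$.

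\emph{Induction over interactions, and the main obstacle.} After $t_{(1)}$ the exact solution is again of the form \eqref{eq:acc_solution} with at most $K-1$ jumps and new data $u(\cdot,t_{(1)})$. I would restart the estimate from a grid time $t^m$ just past $t_{(1)}$, treating $u_\Dx(\cdot,t^m)$ as initial data and applying the pre-interaction lemma on the next interval, iterating at most $K-1$ times. The delicate point --- and where I expect the real work to lie --- is the interaction window itself: there the rigid discrete-shock ansatz breaks down, as no steady profile describes two merging shocks, so the primitive estimate cannot be applied locally and one must rely on $W_1$-contractivity alone to carry the already-$O(\Dx^2)$ error across the collision without growth. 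Making this rigorous requires choosing the restart time and re-centering the new composite reference so that mass is matched across the interaction, and then showing that the error contributed during the short collision window is genuinely $O(\Dx^2)$ rather than merely $O(\Dx)$; this is where the convexity of $f$ and the monotone structure, together with contractivity, must be used carefully. Granting this, each of the at most $K-1$ restarts adds $O(\Dx^2)$, which gives the claimed bound $W_1\bigl(u(\cdot,t^n),u_\Dx(\cdot,t^n)\bigr) \leq C\Dx^2$ with $C = C(K,\text{jump sizes})$.
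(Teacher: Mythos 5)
Your single-shock argument is essentially the paper's (mass-matched discrete shock via the free parameter $u^*$, exponential decay giving $O(\Dx^2)$, then contractivity plus the triangle inequality), but your multi-shock step before $t_{(1)}$ rests on a false premise. You claim the patched reference fails to solve the scheme only by a residual $h^n$ with $W_{1,D}(h^n,0) = O(e^{-c/\Dx})$ ``because the shocks are separated by an $O(1)$ distance.'' By the very definition of $t_{(1)}$, at least two shocks collide at $t_{(1)}$, so their separation tends to \emph{zero} as $t^n \to t_{(1)}^-$; for the time steps with $t_{(1)}-t^n = O(\Dx)$ the exponential factors $\exp\bigl(-\alpha_k|Z^{s}(t^n)-X^{k}(t^n)|/\Dx\bigr)$ are $O(1)$ and the per-step residual is as large as $O(\Dx)$, not exponentially small. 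The paper's proof of Lemma \ref{lem:kshockerror} confronts exactly this: it keeps the exponentials of the shrinking separations, and gains the second factor of $\Dx$ only after summing over $n$, because the separations decrease \emph{linearly} in time, so that $\Dt\sum_n \exp\bigl(-\alpha_k|Z^{s+1}(t^n)-X^k(t^n)|/\Dx\bigr) \leq C\Dx$; combined with the $\Dx$ prefactor this yields \eqref{eq:err_h_disc}. The time-summed residual is genuinely of order $\Dx^2$ --- the same order as the error you are trying to prove --- so it cannot be dismissed as negligible; your argument fails at precisely the times that contribute most.

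The second gap is the interaction step, which you name as the main obstacle and then assume away (``Granting this\dots''), so the induction does not close. Moreover, your restart plan --- ``treating $u_\Dx(\cdot,t^m)$ as initial data and applying the pre-interaction lemma'' --- does not type-check: Lemma \ref{lem:kshockerror} and the induction hypothesis require \emph{exact} decreasing piecewise-constant data of the form \eqref{eq:initial_cond}, whereas $u_\Dx(\cdot,t^m)$ is a smeared numerical profile. The paper's missing idea is to induct on $K$ and introduce an \emph{auxiliary} numerical solution $v_\Dx^{N+1}$ whose initial data is the exact solution $u(\cdot,t^{N+1})$ at the first grid time past $t_{(1)}$ (which has at most $K-1$ jumps, so the induction hypothesis applies to it), and then to bound $W_{1,D}\bigl(v_\Dx^{N+1}(\cdot,0),\, u_\Dx(\cdot,t^{N+1})\bigr)$ by two terms: $\mathcal{E}_1$, comparing the projection of $u(\cdot,t^{N+1})$ with one scheme step applied to the projection of $u(\cdot,t^N)$ --- a one-step consistency error of size $O(\Dx\Dt)$, since exact and numerical fluxes agree except on the $O(K)$ cells adjacent to shocks --- and $\mathcal{E}_2$, which contractivity reduces to the already-established pre-interaction estimate at time $t^N$. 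Some device of this kind (comparison with a numerical solution restarted from \emph{exact} post-interaction data, plus a one-step error estimate across the collision) is needed to carry the $O(\Dx^2)$ bound past $t_{(1)}$; without it your proof establishes the theorem only up to the first interaction time.
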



\section{Discrete dual problem and $W_1$-contractivity}\label{sec:dual_stability}
In this section we prove $W_1$-contractivity for a class of monotone schemes using a dual argument (Sections \ref{sec:dualproblem} and \ref{sec:W1contractive}). We begin by describing the relationship between the Wasserstein and discrete Wasserstein distances $W_1$ and $W_{1,D}$.

\subsection{The $W_1$ and $W_{1,D}$ distances}
In the present setting of one spatial dimension, both the Wasserstein and the discrete Wasserstein distances admit a particularly simple form:
\begin{equation}\label{eq:wassalternative}
W_1(u,v) = \int_{\R}\left| \int_{-\infty}^x (u-v)(y)\ dy\right| dx
\end{equation}
and
\begin{equation}\label{eq:dwassalternative}
W_{1,D}(u_\Dx,v_\Dx) = \sum_{i\in\Z}\left| \sum_{j<i} u_j-v_j \right| \Dx^2.
\end{equation}
These are obtained by integrating \eqref{eq:lipdualdef} by parts and replacing $\frac{d\phi}{dx}$ by its maximum value of 1 (respectively, summation by parts of \eqref{eq:dlipdualdef} and replacing $\frac{\phi_{i+1}-\phi_i}{\Dx}$ by its maximum value of 1). Using these formulas, it is easy to show that the Wasserstein and discrete Wasserstein distances coincide in some important cases:

\begin{proposition}\label{prop:pos_err}
Let $u_\Dx$ and $v_\Dx$ be piecewise constant functions satisfying \eqref{eq:dwassercondition}. Then
\begin{equation}\label{eq:W1vsW1D}
W_{1}(u_\Dx,v_\Dx) \leq W_{1,D}(u_\Dx,v_\Dx).
\end{equation}
If additionally
\begin{align}\label{eq:w1_assump}
\int_{-\infty}^x (u_\Dx-v_\Dx)(y)\ dy \geq 0 \quad (\text{or } \leq 0) \qquad \forall\ x\in\R
\end{align}
then 
\begin{equation}\label{eq:equal_disc_con}
W_1(u_\Dx,v_\Dx) = W_{1,D}(u_\Dx,v_\Dx). 
\end{equation}
\end{proposition}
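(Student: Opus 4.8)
The plan is to work directly with the explicit one-dimensional formulas \eqref{eq:wassalternative} and \eqref{eq:dwassalternative}, reducing the whole statement to a cell-by-cell comparison. Writing $w_i = u_i - v_i$ and introducing the partial sums $S_i := \sum_{j<i} w_j$, the zero-mass and summability conditions \eqref{eq:dwassercondition} guarantee that each $S_i$ is finite, that $S_i \to 0$ as $i\to\pm\infty$, and that $\sum_{i\in\Z}|S_i| < \infty$, so that $W_{1,D}(u_\Dx,v_\Dx) = \Dx^2\sum_i |S_i|$ is well-defined. The key observation is that the continuous cumulative function $W(x) := \int_{-\infty}^x (u_\Dx - v_\Dx)(y)\, dy$ is continuous and, since $u_\Dx-v_\Dx$ is constant on each cell, piecewise affine, taking the nodal values $W(x_{i-\hf}) = \Dx\, S_i$ and running linearly across $I_i$ from $\Dx\, S_i$ to $\Dx\, S_{i+1}$.

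First I would rewrite the Wasserstein distance using \eqref{eq:wassalternative} as $W_1 = \int_\R |W(x)|\, dx = \sum_{i\in\Z} \int_{I_i} |W(x)|\, dx$. The heart of the argument is then an elementary estimate for an affine function on a single cell: if $W$ runs linearly from $a = \Dx\, S_i$ to $b = \Dx\, S_{i+1}$ over an interval of length $\Dx$, then
\[
\int_{I_i} |W(x)|\, dx \leq \frac{\Dx}{2}\bigl(|a| + |b|\bigr),
\]
with equality precisely when $a$ and $b$ do not have strictly opposite signs. When $a$ and $b$ share a sign (or one vanishes), the left-hand side is exactly the trapezoidal value $\frac{\Dx}{2}(|a|+|b|)$; when they straddle zero, computing the two resulting triangular areas gives the left-hand side as $\frac{\Dx}{2}\cdot\frac{a^2+b^2}{|a|+|b|}$, which is strictly smaller because $a^2+b^2 < (|a|+|b|)^2$ whenever $ab<0$.

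Summing this cell-wise bound and using the telescoping identity $\sum_i\bigl(|S_i|+|S_{i+1}|\bigr) = 2\sum_i |S_i|$ (valid by the summability of $(S_i)$) then yields
\[
W_1(u_\Dx,v_\Dx) = \sum_{i\in\Z}\int_{I_i}|W|\ \leq\ \frac{\Dx^2}{2}\sum_{i\in\Z}\bigl(|S_i|+|S_{i+1}|\bigr) = \Dx^2\sum_{i\in\Z}|S_i| = W_{1,D}(u_\Dx,v_\Dx),
\]
which is \eqref{eq:W1vsW1D}. For the equality statement, I would observe that hypothesis \eqref{eq:w1_assump} asserts exactly that $W(x)$ never changes sign; consequently no cell has endpoints $S_i, S_{i+1}$ of strictly opposite sign, so every cell-wise inequality above is in fact an equality, and \eqref{eq:equal_disc_con} follows.

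The argument is essentially self-contained, and I do not anticipate a serious obstacle. The only points requiring a little care are the bookkeeping in the single-cell estimate—splitting into the same-sign and sign-change cases and checking the boundary cases where $S_i$ or $S_{i+1}$ vanishes—and verifying the summability of $(S_i)$ from \eqref{eq:dwassercondition}, which is what justifies rearranging and telescoping the infinite sums above.
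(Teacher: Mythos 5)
Your proof is correct, but it takes a genuinely different route from the paper's, at least for the inequality \eqref{eq:W1vsW1D}. The paper proves that part by duality: for any $\phi\in\Lip(\R)$ with $\|\phi\|_\Lip\leq 1$, the cell averages $\phi_i=\frac{1}{\Dx}\int_{I_i}\phi\,dx$ satisfy $\|\phi\|_\DLip\leq\|\phi\|_\Lip\leq 1$, and since $u_\Dx-v_\Dx$ is piecewise constant the pairing $\int_\R\phi\,(u_\Dx-v_\Dx)\,dx$ only sees these averages; hence the continuous supremum is dominated by the discrete one. You instead bypass test functions entirely and work with the cumulative function $W(x)=\int_{-\infty}^x(u_\Dx-v_\Dx)\,dy$ via \eqref{eq:wassalternative}, reducing everything to the trapezoid estimate $\int_{I_i}|W|\leq\frac{\Dx}{2}(|W(x_{i-\hf})|+|W(x_{i+\hf})|)$ on each cell. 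This buys you two things the paper's argument does not give directly: a unified mechanism for both halves of the proposition (the equality statement is just the observation that under \eqref{eq:w1_assump} every cell-wise trapezoid bound is saturated), and a sharper characterization of equality --- it holds precisely when no two adjacent partial sums $S_i$, $S_{i+1}$ have strictly opposite signs, which is weaker than the global sign hypothesis \eqref{eq:w1_assump}. For the equality part the paper's computation (integrating exactly over each cell, using $\sum_i(u_i-v_i)=0$ to kill the half-cell correction terms) is essentially your trapezoid identity in different clothing. What the paper's duality argument buys in exchange is independence from the representation formula \eqref{eq:wassalternative}, which the paper states without detailed proof; your argument leans on it for both parts, though that is legitimate since the paper presents it as established background. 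The two loose ends you flag --- absolute summability of $(S_i)$ from \eqref{eq:dwassercondition}, needed for the index-shift $\sum_i|S_{i+1}|=\sum_i|S_i|$, and the sign bookkeeping in the single-cell estimate --- are both routine to fill in (for the former, split at $i=0$ and use $S_i=-\sum_{j\geq i}w_j$ on the right half via the zero-mass condition).
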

\begin{proof}
Denote $\phi_i = \frac{1}{\Dx} \int_{I_i} \phi(x) dx$ for any $\phi\in\Lip(\R)$. Then $\|\phi\|_\DLip \leq \| \phi\|_{\Lip}$, so
\begin{align*}
W_{1}(u_\Dx,v_\Dx) &=  \sup_{\| \phi\|_{\Lip}\leq 1} \int_\R \phi(x)\left(u_\Dx-v_\Dx\right)(x)\ dx \\
&= \sup_{\| \phi\|_{\Lip}\leq 1} \sum_i \int_{I_i} \phi(x) \ dx \ (u_i-v_i) \\
&= \sup_{\| \phi\|_{\Lip}\leq 1} \sum_i \phi_i (u_i-v_i)\Dx \leq W_{1,D}(u_\Dx,v_\Dx).
\end{align*}

Under the condition \eqref{eq:w1_assump}, the terms inside the absolute values in \eqref{eq:wassalternative} and \eqref{eq:dwassalternative} have a fixed sign, so the absolute value can be moved outside the integral (sum). By using the fact that $u_\Dx$, $v_\Dx$ are piecewise constant, we get
\begin{align*}
W_1(u_\Dx,v_\Dx) &= \biggl|\int_{\R} \int_{-\infty}^x (u_\Dx(y)-v_\Dx(y))\ dydx\biggr|
= \biggl|\sum_{i\in\Z}\int_{I_i} \int_{-\infty}^x (u_\Dx(y)-v_\Dx(y))\ dydx\biggr| \\
&= \biggl|\sum_{i\in\Z}\Biggl(\sum_{j<i}(u_j-v_j)\Dx^2 + (u_i-v_i)\int_{I_i}(x-x_{i-\hf})\ dx\Biggr)\biggr| \\
&= \biggl| \sum_{i\in\Z}\Biggl(\sum_{j<i}(u_j-v_j)\Dx^2 + (u_i-v_i)\frac{\Dx^2}{2}\Biggr) \biggr| \\
&= \biggl| \sum_{i\in\Z} \sum_{j<i}(u_j-v_j)\Dx^2 \biggr| = W_{1,D}(u_\Dx,v_\Dx)
\end{align*}
(the second-last step following from the fact that $\sum_i u_i-v_i = 0$).
\end{proof}

Next, we prove a simple result on the Wasserstein error of projection onto piecewise constant functions. Below we use the standard notation
\[
{\rm TV}(v) = \limsup_{h\to0}\int_\R \left|\frac{v(x+h)-v(x)}{h}\right|\ dx
\]
and we let ${\rm BV}(\R) = \{ v:\R\to\R\ :\ {\rm TV}(v)<\infty\}$.

\begin{proposition}
Let $v\in{\rm BV}(\R)$ and define $v_i = \frac{1}{\Dx}\int_{I_i}v(x) dx$ and $v_\Dx(x) = \sum_{i\in\Z} v_i\ind_{I_i}(x)$, the piecewise constant projection of $v$. Then
\begin{equation}\label{eq:project_err}
W_1\bigl(v, v_\Dx\bigr) \leq {\rm TV}(v) \Dx^2. 
\end{equation}
\end{proposition}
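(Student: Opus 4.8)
The plan is to use the explicit one-dimensional formula \eqref{eq:wassalternative} for $W_1$ together with the fact that the piecewise constant projection $v_\Dx$ preserves the cell averages of $v$. First I would introduce the antiderivative
\[
\Psi(x) = \int_{-\infty}^x \bigl(v(y) - v_\Dx(y)\bigr)\ dy,
\]
so that by \eqref{eq:wassalternative} we have $W_1(v,v_\Dx) = \int_\R |\Psi(x)|\ dx$. The crucial observation is that because $v_i$ is exactly the average of $v$ over $I_i$, the difference $v - v_\Dx$ has zero integral over each individual cell $I_i$; hence $\Psi$ vanishes at every cell interface $x_{i-\hf}$, i.e. $\Psi(x_{i-\hf}) = 0$ for all $i$. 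This localizes the problem: the integral $\int_\R |\Psi|$ decomposes as $\sum_i \int_{I_i} |\Psi(x)|\ dx$, and on each cell $\Psi$ starts and ends at zero.

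The next step is to bound $|\Psi(x)|$ on a single cell $I_i = [x_{i-\hf}, x_{i+\hf})$. Since $\Psi(x_{i-\hf}) = 0$, for $x\in I_i$ we can write $\Psi(x) = \int_{x_{i-\hf}}^x (v(y) - v_i)\ dy$. I would bound this by the oscillation of $v$ on the cell: $|\Psi(x)| \leq \int_{x_{i-\hf}}^{x_{i+\hf}} |v(y) - v_i|\ dy$, and since $v_i$ is the average of $v$ on $I_i$, each $|v(y) - v_i|$ is controlled by the total variation of $v$ over $I_i$, giving $|\Psi(x)| \leq \Dx\,{\rm TV}(v; I_i)$, where ${\rm TV}(v;I_i)$ denotes the variation of $v$ restricted to the cell. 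Integrating this pointwise bound over $x\in I_i$ contributes at most $\Dx^2\,{\rm TV}(v;I_i)$ to the total.

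Summing over all cells then yields
\[
W_1(v,v_\Dx) = \sum_{i\in\Z} \int_{I_i} |\Psi(x)|\ dx \leq \Dx^2 \sum_{i\in\Z} {\rm TV}(v; I_i) \leq \Dx^2\,{\rm TV}(v),
\]
where the last inequality uses that the variations over disjoint cells add up to at most the total variation of $v$ on $\R$. This is precisely the claimed estimate \eqref{eq:project_err}.

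The step I expect to be the most delicate is the per-cell bound $|v(y) - v_i| \leq {\rm TV}(v;I_i)$ and, more importantly, making the decomposition rigorous for general $v\in{\rm BV}(\R)$, where $v$ need not be continuous and the cell-average integrals must be interpreted carefully. One clean way to handle this is to note that for any $y,z\in I_i$ one has $|v(y)-v(z)| \leq {\rm TV}(v;I_i)$ (up to the usual subtleties of good representatives of BV functions), so that averaging over $z$ gives $|v(y)-v_i|\leq {\rm TV}(v;I_i)$; the remaining estimates are then elementary. An alternative, fully rigorous route avoiding the pointwise oscillation argument is to integrate by parts in $\Psi$ using the measure $dv$, but the averaging argument above is the most transparent and is what I would write up.
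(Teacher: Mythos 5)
Your proposal is correct and follows essentially the same route as the paper: both use the formula \eqref{eq:wassalternative}, localize the inner integral to each cell $I_i$ via the observation that the antiderivative of $v-v_\Dx$ vanishes at every interface $x_{i-\hf}$ (since $v_i$ is the cell average), and then gain the factor $\Dx^2$ by a per-cell estimate summed against the total variation. The only difference is cosmetic: the paper compresses the last step into the standard projection bound $\Dx\sum_i\int_{I_i}|v-v_\Dx|\,dy\leq\Dx^2\,{\rm TV}(v)$, whereas you unpack it via the per-cell oscillation bound $|v(y)-v_i|\leq{\rm TV}(v;I_i)$, which is exactly how that standard bound is proved.
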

\begin{proof}
We use the formula \eqref{eq:wassalternative}:
\begin{align*}
W_1\bigl(v, v_\Dx\bigr) &= \int_{\R}\left| \int_{-\infty}^x v(y) - v_\Dx(y)\ dy\right| dx \\
&= \sum_i \int_{I_i}\left| \int_{-\infty}^x v(y) - v_\Dx(y)\ dy\right| dx \\
&= \sum_i \int_{I_i}\left| \int_{x_{i-\hf}}^x v(y) - v_\Dx(y)\ dy \right| dx \\
&\leq \Dx \sum_i \int_{I_i} \left| v(y) - v_\Dx(y)\right| \ dy \leq \Dx^2 {\rm TV}(v).\qedhere
\end{align*}
\end{proof}

\subsection{The discrete dual problem}\label{sec:dualproblem}
The following auxiliary lemma gives the stability of a (backwards) dual equation, and can be seen as a (special case of a) discrete version of \cite[Theorem 2.2]{Tad91}.
\begin{lemma}\label{lem:dualstability}
Let $\phi_i^n$ satisfy the backward difference equation
\begin{align}\label{eq:backwardprob}
\frac{\phi_i^{n+1}-\phi_i^{n}}{\Dt} + \frac{1}{\Dx}\Bigl(A_i^n\left( \phi_{i+1}^{n+1}-\phi_i^{n+1}\right) +B_i^n \left( \phi_i^{n+1}-\phi_{i-1}^{n+1}\right)\Bigr)=0
\end{align}
where
\begin{equation}\label{eq:backwardcond}
0\leq A_i^n\leq A_{i-1}^n, \qquad 0\geq B_{i-1}^n \geq B_i^n, \qquad \lambda\left(A_{i-1}^n - B_i^n\right) \leq 1 \qquad \text{for all $i$ and $n$.}
\end{equation}
Then
\[
\|\phi^n\|_\DLip \leq \|\phi^{n+1}\|_\DLip \qquad \forall\ n.
\]
\end{lemma}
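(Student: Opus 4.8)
### Proof proposal

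The plan is to track how the discrete Lipschitz seminorm $\|\phi^n\|_\DLip = \sup_i |\phi_{i+1}^n - \phi_i^n|/\Dx$ evolves backward in time, and to show that one step of the scheme \eqref{eq:backwardprob} can only increase this quantity. The natural object to study is the difference $\psi_i^n := \phi_{i+1}^n - \phi_i^n$, since $\|\phi^n\|_\DLip = \frac{1}{\Dx}\sup_i |\psi_i^n|$. First I would rewrite \eqref{eq:backwardprob} so that $\phi_i^n$ is expressed explicitly in terms of the values at time level $n+1$: solving for $\phi_i^n$ gives
\[
\phi_i^n = \phi_i^{n+1} + \lambda A_i^n(\phi_{i+1}^{n+1}-\phi_i^{n+1}) + \lambda B_i^n(\phi_i^{n+1}-\phi_{i-1}^{n+1}).
\]
Then I would subtract the version of this identity at index $i$ from the version at index $i+1$ to obtain an expression for $\psi_i^n = \phi_{i+1}^n - \phi_i^n$ as a linear combination of the neighbouring differences $\psi_{i-1}^{n+1}, \psi_i^{n+1}, \psi_{i+1}^{n+1}$.

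The key step is to check that, after collecting terms, $\psi_i^n$ is a \emph{convex combination} (nonnegative coefficients summing to one) of $\psi_{i-1}^{n+1}$, $\psi_i^{n+1}$, and $\psi_{i+1}^{n+1}$ — possibly with a subtlety coming from the fact that the coefficients $A_i^n, B_i^n$ carry an $i$-dependence, so that differencing produces terms like $A_{i+1}^n - A_i^n$ and $B_{i+1}^n - B_i^n$ multiplied by the time-$(n{+}1)$ values of $\phi$ rather than its differences. This is where the monotonicity hypotheses \eqref{eq:backwardcond} enter: the assumptions $0\le A_i^n\le A_{i-1}^n$ and $0\ge B_{i-1}^n\ge B_i^n$ encode exactly that the coefficient sequences are monotone in $i$, which should let me rewrite the awkward cross terms in terms of differences with a definite sign, and the CFL-type bound $\lambda(A_{i-1}^n - B_i^n)\le 1$ should guarantee that the self-coefficient of $\psi_i^{n+1}$ stays nonnegative. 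Once all three coefficients are verified nonnegative and seen to sum to one, the bound
\[
|\psi_i^n| \le \max\{|\psi_{i-1}^{n+1}|, |\psi_i^{n+1}|, |\psi_{i+1}^{n+1}|\} \le \sup_j |\psi_j^{n+1}|
\]
follows immediately, and taking the supremum over $i$ yields $\|\phi^n\|_\DLip \le \|\phi^{n+1}\|_\DLip$.

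I expect the main obstacle to be exactly the bookkeeping of the spatially varying coefficients: because $A$ and $B$ depend on $i$, the difference $\psi_i^n$ will not reduce to a clean three-point average of $\psi^{n+1}$ without using the ordering conditions to absorb the leftover terms. The delicate point is to confirm that the monotonicity directions in \eqref{eq:backwardcond} line up correctly with the signs needed for convexity — in particular that $A$ decreasing and $B$ decreasing in $i$ produce coefficients of the correct sign after subtracting index $i$ from index $i+1$. I would verify the coefficient identities carefully on paper, perhaps first in the constant-coefficient case (where the convex-combination structure is transparent and the CFL condition alone suffices) and then checking that the variable-coefficient corrections preserve nonnegativity, rather than attempting to guess the final grouping directly.
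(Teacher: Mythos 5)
Your proposal follows essentially the same route as the paper's proof: difference the backward equation in space, write $\psi_i^n$ as a nonnegative linear combination of $\psi_{i-1}^{n+1},\psi_i^{n+1},\psi_{i+1}^{n+1}$, use the CFL bound $\lambda(A_{i-1}^n-B_i^n)\leq 1$ for nonnegativity of the self-coefficient and the monotonicity of $A,B$ in $i$ to control the coefficient sum, and conclude with the max bound. Two small corrections to your anticipated obstacles: the differencing produces no awkward raw-$\phi$ cross terms at all (e.g.\ $A_{i+1}^n(\phi_{i+2}^{n+1}-\phi_{i+1}^{n+1})-A_i^n(\phi_{i+1}^{n+1}-\phi_i^{n+1})$ is already $A_{i+1}^n\psi_{i+1}^{n+1}-A_i^n\psi_i^{n+1}$, a combination of differences), and the resulting coefficients sum to $1+\lambda(A_{i+1}^n-A_i^n)+\lambda(B_{i+1}^n-B_i^n)\leq 1$, i.e.\ to \emph{at most} one rather than exactly one, which still gives $|\psi_i^n|\leq\max\bigl(|\psi_{i-1}^{n+1}|,|\psi_i^{n+1}|,|\psi_{i+1}^{n+1}|\bigr)$.
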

\begin{proof}
Define $\psi_i^n := (\phi_i^n-\phi_{i-1}^n)/\Dx$. Taking a difference of \eqref{eq:backwardprob} in space, we see that $\psi$ satisfies
\begin{align*}
\psi_{i}^{n} &= \psi_{i}^{n+1}  + \frac{\Dt}{\Dx}\Bigl(A_i^n\psi_{i+1}^{n+1} - A_{i-1}^n\psi_{i}^{n+1} +B_i^n\psi_{i}^{n+1} - B_{i-1}^{n}\psi_{i-1}^{n+1} \Bigr) \\
&= \left(1-\lambda\left(A_{i-1}^n-B_i^n\right)\right)\psi_i^{n+1}  + \lambda A_i^n\psi_{i+1}^{n+1} + \bigl(-\lambda B_{i-1}^n\bigr)\psi_{i-1}^{n+1}.
\end{align*}
By our assumptions on $\{A_i^n\}_{i \in \Z}$ and $\{B_i^n\}_{i \in \Z}$, each coefficient is nonnegative, so upon taking absolute values we get
\[
|\psi_{i}^{n}| \leq \left(1-\lambda\left(A_{i-1}^n-B_i^n\right)\right)|\psi_i^{n+1}|  + \lambda A_i^n|\psi_{i+1}^{n+1}| -\lambda B_{i-1}^n|\psi_{i-1}^{n+1}|.
\]
Since $\{A_i^n\}_{i \in \Z}$ and $\{B_i^n\}_{i \in \Z}$ are decreasing sequences, the coefficients sum up to at most 1, and so
\[
|\psi_{i}^{n}| \leq \max\left(|\psi_i^{n+1}|,\ |\psi_{i+1}^{n+1}|,\ |\psi_{i-1}^{n+1}|\right).
\]
This proves our claim.
\end{proof}

We use Lemma \ref{lem:dualstability} to prove the following $W_1$-contractivity result using a duality argument.

\begin{theorem}[$W_1$-contractivity]\label{thm:stability}
Let $u_0, v_0 \in L^\infty(\R)$ be decreasing functions whose difference $u_0-v_0$ satisfies \eqref{eq:wassercondition}. Let $u_\Dx(x,t)$ and $v_{\Dx}(x,t)$ be computed by the monotone, inhomogeneous schemes \eqref{eq:inhom_num}. Assume that we can write
\begin{equation}\label{eq:lip_flux}
F(u_{i}^{n}, u_{i+1}^{n})-F(v_{i}^{n}, v_{i+1}^{n})=A_i^n(u_i^n-v_i^n)+B_{i+1}^n(u_{i+1}^n-v_{i+1}^n),
\end{equation}
where $A_i^n$ and $B_{i+1}^n$ satisfy \eqref{eq:backwardcond}. Further, assume that the CFL condition 
\begin{align}\label{eq:cfl}
\frac{\Dt}{\Dx}\max_{\underline{u} \leq a,b \leq \overline{u}} \left| \frac{\partial F}{\partial a}(a,b) \right|\leq \frac{1}{2}, \qquad 
\frac{\Dt}{\Dx}\max_{\underline{u} \leq a,b \leq \overline{u}} \left| \frac{\partial F}{\partial b}(a,b) \right| \leq \frac{1}{2},
\end{align}
where $\underline{u}=\min_{i} u_i^0$ and $\overline{u}=\max_{i}u_i^0$, is satisfied. Then the scheme is $W_1$-contractive.
\end{theorem}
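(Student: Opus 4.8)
The plan is to establish the contractivity estimate first at the level of the \emph{discrete} Wasserstein distance $W_{1,D}$ by a duality argument, and then deduce the stated $W_1$ bound \eqref{eq:wass_err_nonhom} from Proposition \ref{prop:pos_err}. First I would set $w_i^n := u_i^n - v_i^n$ and subtract the two inhomogeneous schemes in \eqref{eq:inhom_num}. The assumed decomposition \eqref{eq:lip_flux} linearizes the difference of numerical fluxes, turning the subtracted scheme into a variable-coefficient linear evolution
\begin{equation}
w_i^{n+1} = \bigl(1-\lambda(A_i^n-B_i^n)\bigr)w_i^n + \lambda A_{i-1}^n w_{i-1}^n - \lambda B_{i+1}^n w_{i+1}^n + (h_i^n-g_i^n)\Dt.
\end{equation}

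The key idea is to pair this evolution against the backward dual problem \eqref{eq:backwardprob}. I would fix an arbitrary grid function $\phi=\phi^N$ with $\|\phi\|_\DLip\le1$ and define $\phi^n$ for $n<N$ by solving \eqref{eq:backwardprob} backward in time, using the very coefficients $A_i^n,B_i^n$ from \eqref{eq:lip_flux}. A summation by parts (reindexing the neighbour contributions $w_{i\pm1}^n$) shows that \eqref{eq:backwardprob} is precisely the adjoint of the forward evolution, so that the pairing $\sum_i \phi_i^n w_i^n\Dx$ obeys the one-step identity
\begin{equation}
\sum_i \phi_i^{n+1} w_i^{n+1}\Dx = \sum_i \phi_i^n w_i^n\Dx + \Dt\sum_i \phi_i^{n+1}(h_i^n-g_i^n)\Dx.
\end{equation}
Telescoping from $n=0$ to $N-1$ then writes $\sum_i \phi_i w_i^N\Dx$ in terms of the initial pairing and the source pairings.

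It remains to bound the right-hand side. Since the coefficients satisfy \eqref{eq:backwardcond} — the sign and monotonicity conditions by assumption, and the third inequality $\lambda(A_{i-1}^n-B_i^n)\le1$ because the CFL condition \eqref{eq:cfl} forces $\lambda A_{i-1}^n\le\hf$ and $-\lambda B_i^n\le\hf$ — Lemma \ref{lem:dualstability} applies and yields $\|\phi^n\|_\DLip\le\|\phi^{n+1}\|_\DLip$ for every $n$, hence $\|\phi^n\|_\DLip\le\|\phi^N\|_\DLip\le1$ for all $n\le N$. Invoking the duality definition \eqref{eq:dlipdualdef} of $W_{1,D}$, every pairing against a $\DLip$-unit function is controlled by the corresponding $W_{1,D}$ distance, so
\begin{equation}
\sum_i \phi_i w_i^N\Dx \leq W_{1,D}\bigl(u_\Dx(0),v_\Dx(0)\bigr) + \Dt\sum_{n=0}^{N-1} W_{1,D}(h^n,g^n).
\end{equation}
Taking the supremum over all $\phi$ with $\|\phi\|_\DLip\le1$ gives $W_{1,D}$-contractivity, and Proposition \ref{prop:pos_err} then upgrades the left-hand side to $W_1$, proving \eqref{eq:wass_err_nonhom}.

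The substantive steps — checking that \eqref{eq:backwardprob} is exactly the adjoint and applying Lemma \ref{lem:dualstability} — are clean once the pairing is set up. The hard part will be the \emph{well-definedness and absolute convergence} of the infinite sums: the dual variable $\phi_i^n$ is only controlled through its increments and may grow linearly in $|i|$, so each pairing $\sum_i\phi_i^n w_i^n\Dx$ converges only because $w^n$ decays fast enough. I would therefore need to verify that the zero-mass and weighted-summability conditions \eqref{eq:dwassercondition} are propagated by the scheme to every time level (using conservativity for the former and the finite numerical domain of dependence for the latter, together with the analogous hypothesis on $h^n-g^n$), so that the duality pairing and the telescoping identity are justified throughout.
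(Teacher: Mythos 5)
Your proposal is correct and follows essentially the same route as the paper's proof: the paper likewise pairs the difference of the two schemes against the solution of the backward dual problem \eqref{eq:backwardprob2}, telescopes the resulting one-step identity, invokes Lemma \ref{lem:dualstability} to conclude $\|\phi^n\|_\DLip\leq\|\phi^N\|_\DLip\leq 1$ for all $n$, and relates $W_{1,D}$ to $W_1$ via Proposition \ref{prop:pos_err}. The only cosmetic differences are that the paper performs the reduction to a pure $W_{1,D}$ estimate at the outset rather than at the end, and it dispatches the well-definedness issue you flag with a one-line remark that the boundary terms in the summation by parts vanish because $u^0-v^0$ satisfies \eqref{eq:dwassercondition}.
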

If the flux is only Lipschitz continuous, we interpret $\partial_a F$ and $\partial_b F$ in \eqref{eq:cfl} as the Lipschitz constants of $F$.

\begin{proof}
We follow a duality approach similar to \cite[Theorem 2.1]{nessy_conv92}, although our proof differs in some important aspects. By \eqref{eq:W1vsW1D} in Proposition \ref{prop:pos_err}, it suffices to show that
\begin{equation*}
W_{1,D}\bigl(u_\Dx(t^N), v_{\Dx}(t^N)\bigr) \leq W_{1,D}\bigl(u_\Dx(0), v_\Dx(0)\bigr) + \Dt \sum_{n=0}^{N-1} W_{1,D}\bigl(h^n, g^n\bigr).
\end{equation*}
Let $\phi^{n+1}$ be any grid function with $\|\phi^{n+1}\|_\DLip\leq1$. Then
\begin{align*}
&\Dx \sum_{i} \phi_i^{n+1}\bigl(u_i^{n+1}-v_i^{n+1}\bigr) \\*
=\ &\Dx \sum_{i} \phi_i^{n+1}\Bigl[u_i^{n}-v_i^{n} - \lambda\Bigl( F(u_i^{n}, u_{i+1}^{n})-F(u_{i-1}^{n}, u_{i}^{n})-F(v_i^{n}, v_{i+1}^{n}) + F(v_{i-1}^{n}, v_{i}^{n})\Bigr) + \Dt\bigl(h_i^n-g_i^n\bigr)\Bigr].
\end{align*}
Using \eqref{eq:lip_flux}, we get
\begin{align*}
&\Dx \sum_{i} \phi_i^{n+1}\Bigl[u_i^{n}-v_i^{n} - \lambda\Bigl(A_i^n\bigl(u_i^n-v_i^n\bigr)+B_{i+1}^n\bigl(u_{i+1}^n-v_{i+1}^n\bigr) -A_{i-1}^n\bigl(u_{i-1}^n-v_{i-1}^n\bigr)-B_i^n\bigl(u_{i}^n-v_{i}^n\bigr)\Bigr) \\
&\qquad\qquad\qquad\quad + \Dt\bigl(h_i^n-g_i^n\bigr)\Bigr] \\
&=  \Dx \sum_i\Bigl[\phi_i^{n+1} + \lambda\Bigl(A_i^n\bigl(\phi_{i+1}^{n+1}-\phi_{i}^{n+1}\bigr) + B_i^n\bigl(\phi_{i}^{n+1}-\phi_{i-1}^{n+1}\bigr)\Bigr) \Bigr](u_i^n-v_i^n) + \sum_i \phi_i^{n+1}\bigl(h_i^n-g_i^n\bigr)\Dx\Dt,
\end{align*}
where we in the last step have applied summation by parts and used the fact that
\[
\lim_{i \to \pm \infty} \phi_{i+1}^{n+1}A_{i}^n\bigl(u_{i}^n-v_{i}^n\bigr) = 0, \qquad \lim_{i \to \pm \infty} \phi_{i}^{n+1}B_{i}^n\bigl(u_{i}^n-v_{i}^n\bigr) = 0, 
\]
(which follows from the fact that $u^0-v^0$ satisfies \eqref{eq:dwassercondition}).
We choose now $\phi_i^n$ to satisfy the discrete backward problem
\begin{align}\label{eq:backwardprob2}
\phi_i^{n} = \phi_i^{n+1} + \lambda\Bigl(A_i^n\left( \phi_{i+1}^{n+1}-\phi_i^{n+1}\right) +B_i^n \left( \phi_i^{n+1}-\phi_{i-1}^{n+1}\right) \Bigr), \qquad n=0,\dots,N-1
\end{align}
(which coincides with \eqref{eq:backwardprob}). Then
\begin{align}\label{eq:sum_equality}
\sum_{i} \phi_i^{n+1}\bigl(u_i^{n+1}-v_i^{n+1}\bigr) \Dx = \sum_{i} \phi_i^{n}\bigl(u_i^{n}-v_i^{n} \bigr)\Dx + \sum_i \phi_i^{n+1}\bigl(h_i^n-g_i^n\bigr)\Dx\Dt.
\end{align}
Iterating over $n$, we find that
\[
\sum_{i} \phi_i^{N}\bigl(u_i^{N}-v_i^{N}\bigr) \Dx = \sum_{i} \phi_i^0\bigl(u_i^0-v_i^0 \bigr)\Dx + \sum_{n=0}^{N-1}\sum_i \phi_i^{n+1}\bigl(h_i^n-g_i^n\bigr)\Dx\Dt. 
\]
Under our assumptions on $\lambda$, $u_0, v_0$, $\{A_i^n\}_{i \in \Z}$, $\{B_i^n\}_{i \in \Z}$ and the CFL number $\lambda=\frac{\Dt}{\Dx}$, the conditions \eqref{eq:backwardcond} of Lemma \ref{lem:dualstability} are satisfied. Thus, $\|\phi^n\|_\DLip \leq \|\phi^N\|_\DLip$ for $n=0,\dots,N$, and so
\begin{align*}
\sum_{i} \phi_i^{N}\bigl(u_i^{N}-v_i^{N}\bigr) \Dx &\leq \|\phi^0\|_\DLip W_{1,D}\bigl(u_\Dx(0), v_\Dx(0)\bigr) + \sum_{n=0}^{N-1} \|\phi^{n+1}\|_\DLip W_{1,D}\bigl(h^n, g^n\bigr)\Dt \\
&\leq \|\phi^N\|_\DLip \left(W_{1,D}\bigl(u_\Dx(0), v_\Dx(0)\bigr) + \sum_{n=0}^{N-1}W_{1,D}\bigl(h^n, g^n\bigr)\Dt\right)
\end{align*}
Taking the supremum over all $\phi^N$ with $\|\phi^N\|_\DLip \leq 1$ gives the desired result.
\end{proof}

\subsection{$W_1$-contractive schemes}\label{sec:W1contractive}
For decreasing initial data $u_0, v_0 \in L^\infty(\R)$, several monotone schemes satisfy condition \eqref{eq:backwardcond} and are therefore $W_1$-contractive. In this section, we show that the condition is satisfied for the Lax--Friedrichs, Enquist--Osher and Godunov schemes. These schemes have Lipschitz continuous fluxes and so we can write
\begin{align}
F(u_{i}^{n}, u_{i+1}^{n})-F(v_{i}^{n}, v_{i+1}^{n}) &= \int_0^1 \frac{\partial}{\partial s} \left( F(v_{i}^{n} + s(u_{i}^{n}-v_{i}^{n}), v_{i+1}^{n}+s(u_{i+1}^{n}-v_{i+1}^{n}) \right)ds \nonumber \\
 &= \int_0^1 \frac{\partial F}{\partial a}\left((1-s)v_{i}^{n} + su_{i}^n, (1-s)v_{i+1}^{n} + su_{i+1}^n \right)  ds (u_{i}^{n}-v_{i}^{n}) \nonumber \\
&\quad + \int_0^1 \frac{\partial F}{\partial b}\left((1-s)v_{i}^{n} + su_{i}^n, (1-s)v_{i+1}^{n} + su_{i+1}^n \right)  ds (u_{i+1}^{n}-v_{i+1}^{n}) \nonumber \\
 &= A_i^n (u_{i}^{n}-v_{i}^{n}) + B_{i+1}^n (u_{i+1}^{n}-v_{i+1}^{n}). \label{eq:gen_flux_expr}
\end{align}
If $u_{i}^0 \leq u_{i-1}^0$ and $v_{i}^0 \leq v_{i-1}^0$ for all $i$, then by monotonicity, also $u_{i}^{n} \leq u_{i-1}^{n}$ and $v_{i}^{n} \leq v_{i-1}^{n}$ for all $i \in \Z$ and $n\in\N$. Hence, the convex combinations above satisfy $(1-s)v_{i}^{n} + su_{i}^n \leq (1-s)v_{i-1}^{n} + su_{i-1}^n$ for all $i \in \Z$ and $s \in [0,1]$.

\subsubsection{Lax--Friedrichs scheme}
The Lax--Friedrichs flux,
\begin{equation*}
F(a,b) = \frac{1}{2} \bigl( f(a) + f(b) \bigr) -\frac{1}{2\lambda}\left(b-a\right)
\end{equation*}
is differentiable, and the partial derivatives are
\begin{align*}
\frac{\partial F}{\partial a}= \frac{1}{2} \Bigl( f'(a)+\frac{1}{\lambda} \Bigr), \qquad\qquad \frac{\partial F}{\partial b}= \frac{1}{2} \Bigl( f'(b)-\frac{1}{\lambda} \Bigr).
\end{align*}
Inserting these into \eqref{eq:gen_flux_expr} and remembering that the flux $f$ is assumed to be convex and that $|f'(u)|\leq \lambda$, we see that \eqref{eq:backwardcond} holds for this scheme.

\subsubsection{Enquist--Osher scheme}
Similarily, the Enquist--Osher flux,
\begin{equation*}
F(a,b) = \frac{1}{2} \bigl( f(a) + f(b) \bigr) -\frac{1}{2}\int_a^b |f'(\alpha)|d\alpha
\end{equation*} 
is differentiable with partial derivatives
\begin{align*}
\frac{\partial F}{\partial a}= \frac{1}{2}\bigl(f'(a) + |f'(a)| \bigr), \qquad\qquad \frac{\partial F}{\partial b}= \frac{1}{2}\bigl(f'(b) - |f'(b)| \bigr).
\end{align*}
As for the Lax--Friedrichs flux, it is easily seen that the scheme satisfies \eqref{eq:backwardcond}.

\subsubsection{Godunov scheme}
As $f$ is convex and $b \leq a$ for decreasing solutions, the Godunov flux takes the simple form
\begin{equation}\label{eq:godflux}
F(a,b) = \max_{b\leq u \leq a} f(u) = 
\begin{cases}
f(a) & \text{if } f(a) \geq f(b) \\
f(b) & \text{if } f(a) < f(b)
\end{cases}
\end{equation}
with
\begin{align*}
\frac{\partial F}{\partial a}= 
\begin{cases}
f'(a) & \text{if } f(a) \geq f(b) \\
0 & \text{if } f(a) < f(b)
\end{cases} \qquad\qquad 
\frac{\partial F}{\partial b}=
\begin{cases}
0 & \text{if } f(a) \geq f(b) \\
f'(b) &\text{if } f(a) < f(b).
\end{cases}
\end{align*}
With $f$ convex, $f'$ is increasing, and we can observe, by inserting $\partial F_a$ and $\partial F_b$ into \eqref{eq:gen_flux_expr}, that the Godunov scheme also fulfills \eqref{eq:backwardcond}.

\section{Convergence rate for one initial jump}\label{sec:trav_wave}
In this section we consider the case of a single initial jump $u_0(x) = H^{(k)}(x-x^k)$ (cf.\ \eqref{eq:heaviside}) with $u^{(k-1)}>u^{(k)}$, and we prove that any $W_1$-contractive, monotone finite volume scheme converges at a rate of $\Dx^2$ (Theorem \ref{thm:singleshockconv}). To this end, we first show that the difference between the exact solution and a discrete shock is $O(\Dx^2)$ (Lemma \ref{lem:trav_wave_err}), and then apply Theorem \ref{thm:stability} and the triangle inequality to conclude.

Recall from \eqref{eq:travwave} that the entropy solution in this case is
\begin{equation}\label{eq:singlewaveentrsoln}
H^{(k)}(x,t) := H^{(k)}\bigl(x-X^k(t)\bigr).
\end{equation}
According to Theorem \ref{thm:disctravwave}, there exists for every $u^* \in (u^{(k)}, u^{(k-1)})$ a discrete shock $U^{(k)}$ connecting $u^{(k-1)}$ and $u^{(k)}$ such that $U^{(k)}(0)=u^*$. Define
\begin{align}\label{eq:const_TW_initial}
V^{(k)}(x)= \sum_{i\in\Z} U^{(k)}(i)\ind_{I_i}(x).
\end{align}
By selecting the middle state $u^*$ appropriately, we may assume that 
\begin{equation}\label{eq:zeromassfirst}
\int_\R H^{(k)}(x) - V^{(k)}(x)\ dx = 0.
\end{equation}
Together with the exponential decay property \eqref{eq:exponentialdecay}, this ensures that $H^{(k)}-V^{(k)}$ satisfies \eqref{eq:wassercondition}, so that the Wasserstein distance $W_1\bigl(H^{(k)}, V^{(k)}\bigr)$ is well-defined. Defining
\begin{equation}\label{eq:const_TW}
V^{(k)}(x,t)=\sum_{i} U^{(k)}\left(\frac{x_i-X^k(t)}{\Dx}\right)\ind_{I_i}(x),
\end{equation}
it is clear that $V^{(k)}$ is a solution of the numerical scheme \eqref{eq:nummetphys} with initial data $V^{(k)}(x,0)=V^{(k)}(x)$, and satisfies the exponential decay
\begin{subequations}\label{eq:exponentialdecay2}
\begin{align}
\Bigl|V^{(k)}(x,t) - u^{(k-1)}\Bigr| \leq \beta_k e^{-\alpha_k|x-X^k(t)|/\Dx}& &&\forall\ x\leq X^k(t)+2\Dx, \label{eq:exponentialdecay2A} \\
\Bigl|V^{(k)}(x,t) - u^{(k)}\Bigr| \leq \beta_k e^{-\alpha_k|x-X^k(t)|/\Dx}& &&\forall\ x \geq X^k(t)-2\Dx \label{eq:exponentialdecay2B}
\end{align}
\end{subequations}
(possibly with new constants $\alpha_k$ and $\beta_k$). Moreover, from the property \eqref{eq:jump_condition} we find that
\begin{align*}
0 =&\ \int_{\R} V^{(k)}(x) - H^{(k)}(x) \ dx = \Dx \sum_i \left[ U^{(k)}(i) - \frac{1}{\Dx} \int_{I_i} H^{(k)}(x) dx \right] \\
=&\ \Dx\sum_i\left[U^{(k)}\left(\frac{x_i-X^k(t)}{\Dx}\right) -\frac{1}{\Dx} \int_{I_i} H^{(k)}(x-X^k(t)) dx \right] \\
&+ \Dx \sum_i \left[ U^{(k)}\left(\frac{x_i}{\Dx}\right)-U^{(k)}\left(\frac{x_i-X^k(t)}{\Dx}\right) - \frac{1}{\Dx} \int_{I_i} H^{(k)}(x) -H^{(k)}(x-X^k(t)) dx \right] \\
=&\ \Dx \sum_i  \left[ U^{(k)}\left(\frac{x_i-X^k(t)}{\Dx}\right) -\frac{1}{\Dx} \int_{I_i} H^{(k)}(x,t)) dx \right] \\
& -X^k(t)\bigl(u^{(k-1)}-u^{(k)}\bigr) - X^k(t)\bigl(u^{(k)}-u^{(k-1)}\bigr) \\
= &\ \Dx \sum_i  \left[ U^{(k)}\left(\frac{x_i-X^k(t)}{\Dx}\right) -\frac{1}{\Dx} \int_{I_i} H^{(k)}(x,t) dx \right] \\
= &\ \int_{\R}  V^{(k)}(x,t) - H^{(k)}(x,t) \ dx.
\end{align*}
Thus,
\begin{equation}\label{eq:mass_change_zero}
\int_{\R} H^{(k)}(x,t) - V^{(k)}(x,t) \ dx = \int_{\R} H^{(k)}(x) - V^{(k)}(x) \ dx = 0,
\end{equation}
so the difference $H^{(k)}(\cdot,t) - V^{(k)}(\cdot,t)$ satisfies \eqref{eq:wassercondition}, and hence the Wasserstein distance $W_1\bigl(H^{(k)}(\cdot,t) - V^{(k)}(\cdot,t)\bigr)$ is well-defined.

\begin{lemma}\label{lem:trav_wave_err}
For any $t \geq 0$,
\begin{align}\label{eq:trav_wav_err} 
W_1\left(H^{(k)}(\cdot,t),\ V^{(k)}(\cdot,t)\right) \leq C\Dx^2,
\end{align}
where $H^{(k)}$ and $V^{(k)}$ are defined in \eqref{eq:singlewaveentrsoln} and \eqref{eq:const_TW_initial}, respectively, and $C$ only depends on $u^{(k-1)}$ and $u^{(k)}$.
\end{lemma}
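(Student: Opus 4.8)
The plan is to use the one-dimensional representation \eqref{eq:wassalternative} of the Wasserstein distance, which reduces the estimate to controlling the $L^1$-norm of the cumulative difference
\[
P(x,t) := \int_{-\infty}^x \bigl(H^{(k)}(y,t) - V^{(k)}(y,t)\bigr)\, dy,
\]
so that $W_1\bigl(H^{(k)}(\cdot,t), V^{(k)}(\cdot,t)\bigr) = \int_\R |P(x,t)|\, dx$. The strategy is to show that $P$ is pointwise of size $O(\Dx)$ and, up to exponentially small tails, concentrated in a window of width $O(\Dx)$ around the shock location $X^k(t)$; integrating then yields the $O(\Dx^2)$ bound directly.

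First I would establish a pointwise exponential bound on the integrand $g(y) := H^{(k)}(y,t) - V^{(k)}(y,t)$. For $y \leq X^k(t)$ the exact solution equals $u^{(k-1)}$, so \eqref{eq:exponentialdecay2A} gives $|g(y)| = \bigl|u^{(k-1)} - V^{(k)}(y,t)\bigr| \leq \beta_k e^{-\alpha_k|y - X^k(t)|/\Dx}$; for $y > X^k(t)$ the exact solution equals $u^{(k)}$, and \eqref{eq:exponentialdecay2B} gives the same bound with $u^{(k)}$ in place of $u^{(k-1)}$. Since the two decay estimates are both valid on the overlap $[X^k(t) - 2\Dx,\ X^k(t) + 2\Dx]$, the two regimes match across the jump (after possibly enlarging $\beta_k$ to dominate $|u^{(k-1)}-u^{(k)}|$), giving $|g(y)| \leq \beta_k e^{-\alpha_k|y - X^k(t)|/\Dx}$ for \emph{all} $y$.

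Next I would bound $P$. For $x \leq X^k(t)$ I integrate $|g|$ from $-\infty$ to $x$ to obtain $|P(x,t)| \leq \tfrac{\beta_k}{\alpha_k}\Dx\, e^{-\alpha_k(X^k(t) - x)/\Dx}$. For $x \geq X^k(t)$ I instead invoke the zero-mass identity \eqref{eq:mass_change_zero}, which lets me write $P(x,t) = -\int_x^\infty g(y)\, dy$ and integrate the tail toward $+\infty$, giving the symmetric bound. Hence $|P(x,t)| \leq \tfrac{\beta_k}{\alpha_k}\Dx\, e^{-\alpha_k|x - X^k(t)|/\Dx}$ for every $x$. Integrating over $\R$ gives $\int_\R e^{-\alpha_k|x-X^k(t)|/\Dx}\, dx = \tfrac{2\Dx}{\alpha_k}$, and therefore $W_1 \leq \tfrac{2\beta_k}{\alpha_k^2}\Dx^2$, with a constant depending only on $\alpha_k,\beta_k$, hence only on $u^{(k-1)}$ and $u^{(k)}$.

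The main obstacle, and really the only delicate point, is the matching of the two exponential regimes precisely at the jump $y = X^k(t)$, where $g$ is $O(1)$ rather than small: one must check that the $O(\Dx)$-wide transition region of the discrete shock does not spoil the estimate, which is exactly what the overlapping validity ranges in \eqref{eq:exponentialdecay2} and the normalization \eqref{eq:mass_change_zero} guarantee. Because the decay constants $\alpha_k,\beta_k$ from Theorem \ref{thm:disctravwave} are independent of $\Dx$ and of the sub-cell position of the shock relative to the grid, the resulting bound is uniform in $t$.
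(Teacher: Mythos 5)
Your proposal is correct and follows essentially the same route as the paper's proof: both use the representation \eqref{eq:wassalternative}, split at the shock location $X^k(t)$, bound the left half directly via the exponential decay \eqref{eq:exponentialdecay2A}, and use the zero-mass identity \eqref{eq:mass_change_zero} to convert the right half into a tail integral controlled by \eqref{eq:exponentialdecay2B}, arriving at the same constant $2\beta_k\Dx^2/\alpha_k^2$. The only cosmetic difference is that you bound the cumulative function $P$ pointwise before integrating, whereas the paper manipulates the double integral directly (after first translating the shock to the origin); your worry about matching the two decay regimes at the jump is in fact a non-issue, since each half of your estimate only ever invokes the decay bound on the side where it is valid.
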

\begin{proof}
We write out the representation \eqref{eq:wassalternative} of the Wasserstein distance, apply a change-of-variables and split the integrals into negative and positive values of the integrated variable:
\begin{align*}
W_1&\Bigl(H^{(k)}(\cdot,t),\ V^{(k)}(\cdot,t)\Bigr) = \int_{\R} \left| \int_{-\infty}^x H^{(k)}\bigl(y-X^k(t)\bigr) - V^{(k)}(y,t) dy \right| dx \\
=&\ \int_{\R} \left| \int_{-\infty}^x H^{(k)}(y) - V^{(k)}\bigl(y+X^k(t),t\bigr) dy \right| dx \\
=&\ \int_{-\infty}^0 \left| \int_{-\infty}^x H^{(k)}(y) - V^{(k)}\bigl(y+X^k(t),t\bigr) dy \right| dx + \int_0^\infty \left| \int_{-\infty}^x H^{(k)}(y) - V^{(k)}\bigl(y+X^k(t),t\bigr) dy \right| dx \\
 = &\ \int_{-\infty}^0 \left| \int_{-\infty}^x u^{(k-1)} - V^{(k)}\bigl(y+X^k(t),t\bigr) dy \right| dx \\
&+ \int_0^\infty \left| \int_{-\infty}^0 u^{(k-1)} - V^{(k)}\bigl(y+X^k(t),t\bigr) dy + \int_{0}^x u^{(k)} - V^{(k)}\bigl(y+X^k(t),t\bigr) dy \right| dx.
\end{align*}
From \eqref{eq:mass_change_zero} we see that
\[
\int_{-\infty}^0 u^{(k-1)} - V^{(k)}\bigl(x+X^k(t),t\bigr) \ dx = \int_0^\infty V^{(k)}\bigl(x+X^k(t),t\bigr) - u^{(k)} \ dx,
\]
so the above can be written as
\begin{align*}
W_1&\Bigl(H^{(k)}(\cdot,t),\ V^{(k)}(\cdot,t)\Bigr) = \ \int_{-\infty}^0 \left| \int_{-\infty}^x u^{(k-1)} - V^{(k)}\bigl(y+X^k(t),t\bigr) dy \right| dx \\
& + \int_0^\infty \left| \int_{0}^{\infty} V^{(k)}\bigl(y+X^k(t),t\bigr) - u^{(k)} dy + \int_{0}^x u^{(k)} - V^{(k)}\bigl(y+X^k(t),t\bigr) dy \right| dx \\
= &\ \int_{-\infty}^0 \left| \int_{-\infty}^x u^{(k-1)} - V^{(k)}\bigl(y+X^k(t),t\bigr) dy \right| dx + \int_0^\infty \left| \int_{x}^{\infty} V^{(k)}\bigl(y+X^k(t),t\bigr) - u^{(k)} dy \right| dx \\
\leq &\ \int_{-\infty}^0 \int_{-\infty}^x \beta_k e^{\alpha_k \frac{y}{\Dx}} dy dx + \int_0^\infty \int_{x}^{\infty} \beta_k e^{-\alpha_k \frac{y}{\Dx}} dy dx \\
= &\ 2\frac{\beta_k}{\alpha_k^2}\Dx^2,
\end{align*}
where we have applied the exponential decay estimate \eqref{eq:exponentialdecay2}.
\end{proof}

We are now ready to prove the Main Theorem \ref{thm:main} for the special case $K=1$.
\begin{theorem}\label{thm:singleshockconv}
Let $u_0$ contain a single, negative jump as in \eqref{eq:heaviside}, let $u_\Dx$ be computed with the $W_1$-contractive, monotone finite volume method \eqref{eq:num_met}, assumed to be $W_1$-contractive, and let $u$ be the entropy solution. Then under the CFL condition \eqref{eq:cfl},
\begin{equation}
W_1\bigl(u_\Dx(\cdot, t^n), u(\cdot,t^n)\bigr) \leq C\Dx^2 \qquad \forall\ t^n>0
\end{equation}
for a constant $C>0$ only depending on $u^{(k-1)}$ and $u^{(k)}$.
\end{theorem}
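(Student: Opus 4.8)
The plan is to prove Theorem~\ref{thm:singleshockconv} by a triangle-inequality argument that inserts the discretized discrete shock $V^{(k)}(\cdot,t^n)$ as an intermediate object between the numerical solution $u_\Dx(\cdot,t^n)$ and the exact entropy solution $u(\cdot,t^n) = H^{(k)}(\cdot,t^n)$. Concretely, I would write
\begin{equation*}
W_1\bigl(u_\Dx(\cdot,t^n), H^{(k)}(\cdot,t^n)\bigr) \leq W_1\bigl(u_\Dx(\cdot,t^n), V^{(k)}(\cdot,t^n)\bigr) + W_1\bigl(V^{(k)}(\cdot,t^n), H^{(k)}(\cdot,t^n)\bigr).
\end{equation*}
The second term on the right is already controlled: by Lemma~\ref{lem:trav_wave_err} it is bounded by $C\Dx^2$ with $C$ depending only on $u^{(k-1)}$ and $u^{(k)}$. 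So the entire task reduces to bounding the first term, the Wasserstein distance between the computed solution and the exact discrete shock, both of which are \emph{exact} solutions of the same numerical scheme \eqref{eq:num_met} (recall that $V^{(k)}(x,t)$ solves \eqref{eq:nummetphys} by construction).

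The key idea for the first term is to exploit $W_1$-contractivity. Since both $u_\Dx$ and $V^{(k)}$ evolve under the homogeneous scheme \eqref{eq:num_met} (i.e.\ with source terms $h\equiv g\equiv 0$), Theorem~\ref{thm:stability} gives the stability estimate
\begin{equation*}
W_1\bigl(u_\Dx(\cdot,t^n), V^{(k)}(\cdot,t^n)\bigr) \leq W_{1,D}\bigl(u_\Dx(\cdot,0), V^{(k)}(\cdot,0)\bigr),
\end{equation*}
so it remains only to estimate the \emph{initial} discrete Wasserstein distance. Here I would use the projection estimate \eqref{eq:project_err} together with Proposition~\ref{prop:pos_err}. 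The numerical initial data $u_\Dx(\cdot,0)$ is the piecewise-constant projection of $H^{(k)}(\cdot - x^k)$, while $V^{(k)}(\cdot,0)$ samples the discrete shock at cell centers; both have $O(1)$ total variation that is independent of $\Dx$, and the normalization \eqref{eq:zeromassfirst} was arranged precisely so that the zero-mass condition \eqref{eq:dwassercondition} holds. Comparing both to the continuous shock $H^{(k)}$ via the triangle inequality and \eqref{eq:project_err} should yield $W_{1,D}\bigl(u_\Dx(\cdot,0), V^{(k)}(\cdot,0)\bigr) \leq C\Dx^2$, again with $C$ depending only on the two states. Combining the three bounds finishes the proof.

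The step I expect to be the main obstacle is verifying that the initial discrete Wasserstein distance is genuinely $O(\Dx^2)$ and not merely $O(\Dx)$. The subtlety is that both the projection of the Heaviside profile and the sampled discrete shock differ from $H^{(k)}$ at $O(1)$ magnitude over an $O(\Dx)$ transition region, so a naive $L^1$-style bound only gives $O(\Dx)$. To get the extra factor of $\Dx$ I must work directly with the cumulative (antiderivative) representation \eqref{eq:dwassalternative} and rely on the cancellation built into the zero-mass normalization \eqref{eq:zeromassfirst} and the jump identity \eqref{eq:jump_condition}; the exponential decay \eqref{eq:exponentialdecay} of the discrete shock toward its limiting states confines the discrepancy to finitely many cells near the shock, which is what converts the cumulative sum into an $O(\Dx^2)$ quantity. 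Care is also needed to ensure the source-free version of Theorem~\ref{thm:stability} applies simultaneously to $u_\Dx$ and $V^{(k)}$, i.e.\ that both are decreasing and that the CFL condition \eqref{eq:cfl} holds uniformly in $n$.
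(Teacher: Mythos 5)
Your proposal follows the paper's proof essentially verbatim: the same triangle inequality through $V^{(k)}(\cdot,t^n)$, the same appeal to Lemma~\ref{lem:trav_wave_err} and to the contractivity estimate of Theorem~\ref{thm:stability} with $h\equiv g\equiv 0$, and the same reduction to the initial distance $W_{1,D}\bigl(u_\Dx(\cdot,0),V^{(k)}\bigr)$. The obstacle you flag at the end is resolved in the paper exactly along the lines you anticipate: because the cumulative difference between the projected Heaviside data and the discrete shock profile has a fixed sign, the equality case \eqref{eq:equal_disc_con} of Proposition~\ref{prop:pos_err} converts $W_{1,D}$ into $W_1$, after which the triangle inequality through $H^{(k)}(\cdot,0)$, the projection bound \eqref{eq:project_err}, and Lemma~\ref{lem:trav_wave_err} at $t=0$ give the $O(\Dx^2)$ bound.
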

\begin{proof}
The exact solution is $u(x,t) = H^{(k)}(x,t)$, as defined in \eqref{eq:singlewaveentrsoln}. Applying the triangle inequality and Lemma \ref{lem:trav_wave_err}, we get
\begin{align*}
W_1\bigl(u_\Dx(\cdot, t^n), u(\cdot,t^n)\bigr) &\leq W_1\Bigl(u_\Dx(\cdot, t^n), V^{(k)}(\cdot,t^n)\Bigr) + W_1\Bigl(V^{(k)}(\cdot,t^n), H^{(k)}(\cdot,t^n) \Bigr) \\
&\leq W_1\Bigl(u_\Dx(\cdot, t^n), V^{(k)}(\cdot,t^n)\Bigr) + C\Dx^2.
\end{align*}
By the stability estimate \eqref{eq:wass_err_nonhom} in Theorem \ref{thm:stability} and by Proposition \ref{prop:pos_err}, we have
\begin{align*}
W_1  \Bigl(u_\Dx(\cdot, t^n), V^{(k)}(\cdot,t^n)\Bigr) &\leq W_{1,D}\Bigl(u_\Dx(\cdot, 0), V^{(k)}\Bigr) \\
 & = W_{1}\Bigl(u_\Dx(\cdot, 0), V^{(k)}\Bigr) \\
 &\leq W_{1}\Bigl(u_\Dx(\cdot, 0), H^{(k)}(\cdot,0)\Bigr) + W_{1}\Bigl(H^{(k)}(\cdot,0), V^{(k)}\Bigr) 
\end{align*}
and by \eqref{eq:project_err} and Lemma \ref{lem:trav_wave_err} with $t=0$, the above is bounded by $C\Dx^2$.
\end{proof}

\section{Convergence rate for K initial jumps}\label{sec:multiple_jumps}
In this section we prove the Main Theorem \ref{thm:main} in its full generality, for any (finite) number of shocks $K\in\N$. We first consider only times $t\leq t_{(1)}$ (where $t_{(1)}$ is the time of the first shock interaction), and consider arbitrary times $t>0$ in Section \ref{sec:after_interaction}.

\subsection{Error before shock interactions}
The following lemma proves the Main Theorem \ref{thm:main} for any $K\in\N$, but before any shock interactions.
\begin{lemma}\label{lem:kshockerror}
Under the CFL condition \eqref{eq:cfl}, there is a $C_K>0$ such that
\begin{equation}\label{eq:kshockerror}
W_1\bigl(u(\cdot, t^n),u_\Dx(\cdot ,t^n) \bigr) \leq C_K\Dx^2
\end{equation}
for $0 \leq t^n < t_{(1)}$.
\end{lemma}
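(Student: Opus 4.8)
The plan is to build a global ansatz by superposing the single-shock discrete shocks $V^{(k)}$ constructed in Section \ref{sec:trav_wave}, one for each of the $K$ jumps, and then compare the numerical solution $u_\Dx$ to this superposition via the $W_1$-contractivity estimate of Theorem \ref{thm:stability}, while comparing the superposition to the exact solution $u$ directly using Lemma \ref{lem:trav_wave_err}. Concretely, for each $k=1,\dots,K$ let $U^{(k)}$ be the discrete shock connecting $u^{(k-1)}$ and $u^{(k)}$, with middle state $u^*_k$ chosen so that the single-shock mass condition \eqref{eq:zeromassfirst} holds, and define the traveling profiles $V^{(k)}(x,t)$ as in \eqref{eq:const_TW}, each centered at its own shock location $X^k(t)$. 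The natural global ansatz is the superposition
\begin{equation*}
W_\Dx(x,t) = u^{(0)} + \sum_{k=1}^K \Bigl(V^{(k)}(x,t) - u^{(k-1)}\Bigr),
\end{equation*}
which is a grid function reducing to a single shifted discrete shock near each $X^k(t)$ and to the correct constant state $u^{(j)}$ in each intermediate plateau, up to exponentially small tails by \eqref{eq:exponentialdecay2}.

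First I would estimate $W_1\bigl(u(\cdot,t^n), W_\Dx(\cdot,t^n)\bigr)$. Since the exact solution $u$ is likewise the sum of $K$ shifted Heaviside profiles $H^{(k)}(\cdot - X^k(t))$ (minus overlapping constants), the difference $u - W_\Dx$ splits as a sum of the $K$ single-shock differences $H^{(k)}(\cdot,t) - V^{(k)}(\cdot,t)$. Using the representation \eqref{eq:wassalternative} and the additivity of the inner integral, I would bound $W_1(u,W_\Dx) \leq \sum_{k=1}^K W_1\bigl(H^{(k)}(\cdot,t), V^{(k)}(\cdot,t)\bigr) \leq K\,C\Dx^2$ by Lemma \ref{lem:trav_wave_err}; the key point enabling this is that for $t < t_{(1)}$ the shocks remain a fixed minimum distance apart (bounded below in terms of the $x^k$ and the speeds $D^k$), so the exponential tails of distinct profiles do not interfere to leading order, and the mass condition \eqref{eq:wassercondition} holds termwise.

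Next I would bound $W_1\bigl(u_\Dx(\cdot,t^n), W_\Dx(\cdot,t^n)\bigr)$ using $W_1$-contractivity. The subtlety is that $W_\Dx$ is \emph{not} an exact solution of the homogeneous scheme \eqref{eq:num_met}: each individual $V^{(k)}$ solves \eqref{eq:nummetphys}, but their superposition incurs a defect because the numerical flux $F$ is nonlinear, so $F$ evaluated on the sum differs from the sum of the fluxes. I would treat this defect as an inhomogeneity $h^n_i$, so that $W_\Dx$ solves the inhomogeneous scheme \eqref{eq:inhom_num} with source $h$ while $u_\Dx$ solves it with $g\equiv 0$; then \eqref{eq:wass_err_nonhom} gives
\begin{equation*}
W_1\bigl(u_\Dx(t^n), W_\Dx(t^n)\bigr) \leq W_{1,D}\bigl(u_\Dx(0), W_\Dx(0)\bigr) + \Dt\sum_{m=0}^{n-1} W_{1,D}(h^m,0).
\end{equation*}
The initial term is $O(\Dx^2)$ by the same projection-and-triangle-inequality argument as in the proof of Theorem \ref{thm:singleshockconv} (using \eqref{eq:project_err} and Lemma \ref{lem:trav_wave_err}). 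The main obstacle, and the crux of the lemma, is controlling the accumulated source term: I expect the flux defect $h^m_i$ to be nonzero only in the transition regions, where it is exponentially small in the separation distance between shocks, $|h^m_i| \lesssim e^{-c\,\delta/\Dx}$ with $\delta$ the minimal shock separation (bounded below for $t<t_{(1)}$). Summing the discrete Wasserstein norm of this exponentially localized, exponentially small source over the $O(1/\Dt)$ time steps should yield a total contribution that is $o(\Dx^2)$ — indeed superpolynomially small in $\Dx$ — and hence absorbed into $C_K\Dx^2$. Making this defect estimate rigorous, in particular verifying that $h^m - 0$ satisfies the zero-mass condition \eqref{eq:dwassercondition} at every step so that \eqref{eq:wass_err_nonhom} applies, is where the real work lies; everything else follows from the triangle inequality $W_1(u,u_\Dx) \leq W_1(u,W_\Dx) + W_1(W_\Dx,u_\Dx)$.
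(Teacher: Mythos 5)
Your strategy is the same as the paper's: your superposition $W_\Dx$ coincides, after collecting constants, with the paper's intermediate solution $\bar u = -\sum_{k=1}^{K-1}u^{(k)} + \sum_{k=1}^K V^{(k)}$; the bound $W_1(u,\bar u)\leq\sum_k W_1\bigl(H^{(k)},V^{(k)}\bigr)\leq KC\Dx^2$ is exactly the paper's first step; and treating the nonlinear-flux defect of the superposition as a source $h$ in the inhomogeneous scheme \eqref{eq:inhom_num}, then invoking Theorem \ref{thm:stability} with $g\equiv 0$, is the paper's second step. (Incidentally, no shock-separation argument is needed for the first bound: $u-\bar u$ equals the sum $\sum_k\bigl(H^{(k)}(\cdot,t)-V^{(k)}(\cdot,t)\bigr)$ \emph{exactly}, and $W_1$ is subadditive over such zero-mass decompositions.)

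The genuine gap is in your estimate of the accumulated source. You assume that the minimal shock separation $\delta$ is bounded below uniformly for $t<t_{(1)}$, conclude $|h^m_i|\lesssim e^{-c\delta/\Dx}$ uniformly in $m$, and hence that the accumulated defect is superpolynomially small. But whenever an interaction actually occurs ($t_{(1)}<\infty$, e.g.\ for strictly convex $f$, where $D^1>D^2>\dots>D^K$), the relevant separations $|Z^s(t^n)-X^k(t^n)|$ decrease \emph{linearly to zero} as $t^n$ approaches the pairwise interaction times $t_{s,k}\geq t_{(1)}$. At time steps within $O(\Dx)$ of $t_{(1)}$ the exponential factor is therefore $O(1)$, and all one gets there is $W_{1,D}\bigl(h(\cdot,t^n),0\bigr)=O(\Dx)$ (the spatial integral of the overlapping tails in \eqref{eq:exponentialdecay2}) --- nothing superpolynomially small. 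The correct way to close the argument, which is what the paper does, is to keep the time dependence in the exponent, $W_{1,D}\bigl(h(\cdot,t^n),0\bigr)\lesssim \Dx\sum_k e^{-\alpha_k|Z^s(t^n)-X^k(t^n)|/\Dx}$, and bound the time sum by a time integral of a linearly decaying exponent:
\begin{equation*}
\Dt\sum_{n=0}^{N-1} e^{-\alpha_k|Z^s(t^n)-X^k(t^n)|/\Dx} \;\lesssim\; \int_0^{t_{s,k}} e^{-\alpha_k c\,(t_{s,k}-t)/\Dx}\,dt \;=\; O(\Dx),
\end{equation*}
with $c>0$ the (constant) rate of approach. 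This yields an accumulated source of size $O(\Dx)\cdot O(\Dx)=O(\Dx^2)$ --- a genuine, non-negligible contribution carried almost entirely by the time steps just before the collision, not an error term that can be absorbed as $o(\Dx^2)$. With this replacement your proof goes through; without it, the key step fails precisely in the regime that makes the lemma delicate.
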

\begin{proof}
We use the intermediate solution $\bar{u}(x,t)$, similar to the one introduced in \cite{zhang_teng},
\begin{align*}
\bar{u}(x,t) &:=u(x,t)+\sum_{k=1}^K\left[V^{(k)}\left(x,t\right) - H^{(k)}(x,t)\right] \\
&=-\sum_{k=1}^{K-1}u^{(k)} + \sum_{k=1}^K V^{(k)}\left(x,t\right),
\end{align*}
defined for $0\leq t \leq t_{(1)}$. After time $t=t_{(1)}$, a new intermediate solution with $K-1$ terms (or fewer) is defined in the same manner. Here, as before, $X^{k}$ and $V^{(k)}$ are defined in \eqref{eq:characteristics} and \eqref{eq:const_TW}, respectively. Each modified discrete shock wave $V^{(k)}$ is chosen such that \eqref{eq:mass_change_zero} holds. 

By the triangle inequality, the error in \eqref{eq:kshockerror} can be bounded by
\[
W_1\bigl(u(\cdot, t^n),u_\Dx(\cdot ,t^n) \bigr) \leq W_1\bigl(u(\cdot,t^n),\bar{u}(\cdot,t^n)\bigr) + W_1\bigl(\bar{u}(\cdot,t^n),u_\Dx(\cdot ,t^n)\bigr).
\]
From repeatedly applying the triangle inequality and Lemma \ref{lem:trav_wave_err}, we easily obtain
\begin{align}\label{eq:err_accu_inter}
W_1\bigl(u(\cdot,t^n),\bar{u}(\cdot,t^n)\bigr)\leq C_K \Dx^2
\end{align}
for $0 \leq t^n < t_{(1)}$. Thus, to complete the proof, we need to show that 
\begin{align}\label{eq:err_numer_inter}
W_1\bigl(\bar{u}(\cdot,t^n),u_\Dx(\cdot ,t^n)\bigr)\leq C_K\Dx^2.
\end{align}
The intermediate solution $\bar{u}(x,t^n)$ satisfies the inhomogeneous finite volume method \eqref{eq:inhom_num} with the right hand side
\begin{align*}
h(x,t^n) =&\ \frac{1}{\Dx} \Bigl( F\bigl(\bar{u}(x, t^n),\, \bar{u}(x+\Dx, t^n)\bigr) -  F\bigl(\bar{u}(x-\Dx, t^n),\,\bar{u}(x, t^n)\bigr)\Bigr)\\
& - \frac{1}{\Dx}\sum_{k=1}^K\Bigl(F\bigl(V^{(k)}(x,t^n),\, V^{(k)}(x+\Dx,t^n)\bigr) - F\bigl(V^{(k)}(x-\Dx,t^n),\, V^{(k)}(x, t^n)\bigr)\Bigr)
\end{align*}
for $n=0,\dots,N-1$, where $N$ is the largest integer such that $t^N < t_{(1)}$. We can now use Theorem \ref{thm:stability} with the above $h$ and with $g=0$ to prove \eqref{eq:err_numer_inter}. The initial data is bounded by 
\begin{align*}
W_{1,D}\bigl(\bar{u}(\cdot,0),u_\Dx(\cdot ,0)\bigr) \leq \sum_{k=1}^K W_{1,D}\bigl(H_\Dx^{(k)}(\cdot,0),V^{(k)}(\cdot ,0)\bigr) \leq K C \Dx^2,
\end{align*}
where $H_\Dx^{(k)}(x,0)$ is the piecewise constant projection of $H^{(k)}(x-x^k)$. Each term in the sum above is bounded exactly in the same way as the initial data in the proof of Theorem \ref{thm:singleshockconv}.

So it remains to show that
\begin{equation}\label{eq:err_h_disc}
\Dt \sum_{n=0}^{N-1} W_{1,D}\bigl(h(\cdot,t^n), 0\bigr) \leq C_K \Dx^2.
\end{equation}
Thus, we need to estimate $W_{1,D}\bigl(h(\cdot,t^n), 0\bigr)$ in \eqref{eq:wass_err_nonhom} for each timestep $t^n\leq t^{N-1}$. 
We have
\begin{align*}
V^{(k)}\left(x\pm \Dx,t^n\right) &=\sum_{i} U^{(k)}\Bigl(\frac{x_i-X^{k}(t^n)}{\Dx}\Bigr)\ind_{I_i}(x\pm\Dx) \\
&= \sum_{i} U^{(k)}\Bigl(\frac{x_i-X^{k}(t^n)}{\Dx}\Bigr)\ind_{I_{i\mp 1}}(x) \\
&= \sum_{i} U^{(k)}\Bigl(\frac{x_i\pm\Dx-X^{k}(t^n)}{\Dx}\Bigr)\ind_{I_{i}}(x).
\end{align*}
Denote $V^{(k),n}_i=V^{(k)}(x_i,t^n)$ and $\bar{u}_i^n=\bar{u}(x_i,t^n)=-\sum_{k=1}^{K-1}u^{(k)}+\sum_{k=1}^K V^{(k),n}_i$. By adding and subtracting $\sum_{k=1}^{K-1}f(u^{(k)})$ and applying summation by parts, we get
\begin{align*}
& W_{1,D}\bigl(h(\cdot,t^n), 0\bigr) \\
&= \sup_{\|\phi\|_\DLip\leq 1} \sum_i \phi_i\left[ F\left(\bar{u}_i^n,\bar{u}_{i+1}^n\right) -  F\left(\bar{u}_{i-1}^n,\bar{u}_i^n\right) - \sum_{k=1}^K\left(F\bigl(V^{(k),n}_i,V^{(k),n}_{i+1}\bigr) - F\bigl(V^{(k),n}_{i-1},V^{(k),n}_{i}\bigr) \right)\right] \\
&= \sup_{\|\phi\|_\DLip\leq 1} \sum_i \phi_i\Biggl[ \biggl(F\bigl(\bar{u}_i^n,\bar{u}_{i+1}^n\bigr) - \sum_{k=1}^K F\bigl(V^{(k),n}_i,V^{(k),n}_{i+1}\bigr) + \sum_{k=1}^{K-1}f\bigl(u^{(k)}\bigr)\biggr) \\*
&\qquad\qquad\qquad\qquad  -  \biggl(F\bigl(\bar{u}_{i-1}^n,\bar{u}_i^n\bigr) - \sum_{k=1}^K F\bigl(V^{(k),n}_{i-1},V^{(k),n}_{i}\bigr) + \sum_{k=1}^{K-1}f\bigl(u^{(k)}\bigr)\biggr) \Biggr] \\
&= \sup_{\|\phi\|_\DLip\leq 1} -\sum_i(\phi_{i+1}-\phi_i)\left[F\left(\bar{u}_i^n,\bar{u}_{i+1}^n\right) - \sum_{k=1}^K F\bigl(V^{(k),n}_i,V^{(k),n}_{i+1}\bigr) + \sum_{k=1}^{K-1}f\bigl(u^{(k)}\bigr)\right] \\
&= \sup_{\|\psi\|_{\ell^\infty}\leq 1} \Dx \sum_i \psi_i \left[F\left(\bar{u}_i^n,\bar{u}_{i+1}^n\right) - \sum_{k=1}^K F\bigl(V^{(k),n}_i,V^{(k),n}_{i+1}\bigr) + \sum_{k=1}^{K-1}f\bigl(u^{(k)}\bigr)\right] \\
&= \Dx \sum_i \left|F\left(\bar{u}_i^n,\bar{u}_{i+1}^n\right) - \sum_{k=1}^K F\bigl(V^{(k),n}_i,V^{(k),n}_{i+1}\bigr) + \sum_{k=1}^{K-1}f\bigl(u^{(k)}\bigr)\right|.
\end{align*}
Let
\begin{align*}
Z^s (t) = \frac{1}{2}\left( X^s(t)+X^{s+1}(t) \right), \quad s=1,\dots,K-1.
\end{align*}
Continuing from above and denoting $\bar{u}^n(x)=\bar{u}(x,t^n)$ and $V^{(k),n}(x)=V^{(k)}(x,t^n)$, we get
\begin{align*}
W_{1,D}\bigl(h(\cdot, t^n), 0\bigr) &= \int_\R \left|F\bigl(\bar{u}^n(x),\bar{u}^n(x+\Dx)\bigr) - \sum_{k=1}^K F\bigl(V^{(k),n}(x),V^{(k),n}(x+\Dx)\bigr) + \sum_{k=1}^{K-1}f\bigl(u^{(k)}\bigr)\right| dx \\
&= \sum_{s=0}^{K-1} \mathcal{J}^{(s),n},
\end{align*}
where we have split the integration domain over 
\[
\mathcal{J}^{(0),n}=\int_{-\infty}^{Z^1(t^n)}\dots dx, \qquad \mathcal{J}^{(s),n}=\int_{Z^s(t^n)}^{Z^{s+1}(t^n)}\dots dx,\qquad \mathcal{J}^{(K-1),n}=\int_{Z^{K-1}(t^n)}^\infty\dots dx
\]
(for $s=1,\dots,K-2$). Let $F_a^{(k,l)}(x)$, $F_b^{(k,l)}(x)$, $F_a^{(s)}(x)$ and $F_b^{(s)}(x)$ (where $l$ is either $k$ or $k-1$) be such that
\[
f\bigl(u^{(l)}\bigr) - F\bigl(V^{(k),n}(x), V^{(k),n}(x+\Dx)\bigr) = F_a^{(k,l)}(x)\Bigl(u^{(l)}-V^{(k),n}(x)\Bigr) + F_b^{(k,l)}(x) \Bigl(u^{(l)} - V^{(k),n}(x+\Dx)\Bigr) \\
\]
and
\begin{align*}
& F\bigl(\bar{u}^n(x),\bar{u}^n(x+\Dx)\bigr) - F\bigl(V^{(s),n}(x), V^{(s),n}(x+\Dx)\bigr) \\
&= F_a^{(s)}(x)\Bigl(\bar{u}^n(x)-V^{(s),n}(x)\Bigr) + F_b^{(s)}(x) \Bigl(\bar{u}^n(x+\Dx)-V^{(s),n}(x+\Dx)\Bigr).
\end{align*}
Specifically, we can write
\begin{align*}
F_a^{(k,l)}(x) &= \int_0^1 \frac{\partial F}{\partial a}\left((1-\alpha )u^{(l)} + \alpha V^{(k),n}(x),\ (1-\alpha)u^{(l)} + \alpha V^{(k),n}(x+\Dx) \right)  d\alpha \\
F_b^{(k,l)}(x) &=  \int_0^1 \frac{\partial F}{\partial b}\left((1-\alpha )u^{(l)} + \alpha V^{(k),n}(x),\ (1-\alpha )u^{(l)} + \alpha V^{(k),n}(x+\Dx) \right)  d\alpha \\
F_a^{(s)}(x)   &=  \int_0^1 \frac{\partial F}{\partial a} \left((1-\alpha )\bar{u}^n(x) + \alpha V^{(s),n}(x),\ (1-\alpha )\bar{u}^n(x + \Dx) + \alpha V^{(s),n}(x+\Dx) \right)  d\alpha \\
F_b^{(s)}(x)   &= \int_0^1 \frac{\partial F}{\partial b}\left((1-\alpha )\bar{u}^n(x) + \alpha V^{(s),n}(x),\ (1-\alpha )\bar{u}^n(x + \Dx) + \alpha V^{(s),n}(x+\Dx) \right) d\alpha.
\end{align*}
Let us first consider the first interval, $\mathcal{J}^{(0),n}$. The last interval $\mathcal{J}^{(K-1),n}$ can be treated similarly. 
\begin{align*}
\mathcal{J}^{(0),n} &= \int_{-\infty}^{Z^{1}(t^n)} \left|F\bigl(\bar{u}^n(x),\bar{u}^n(x+\Dx)\bigr) - \sum_{k=1}^K F\bigl(V^{(k),n}(x),V^{(k),n}(x+\Dx)\bigr) + \sum_{k=1}^{K-1}f\bigl(u^{(k)}\bigr) \right| dx \\
= &\ \int_{-\infty}^{Z^{1}(t^n)} \Biggl| \sum_{k=2}^K \biggl[F_a^{(k,k-1)}(x) \Bigl(V^{(k),n}(x) - u^{(k-1)} \Bigr) + F_b^{(k,k-1)}(x)\Bigl(V{(k)}(x+\Dx,t^n) - u^{(k-1)} \Bigr) \biggr]   \\
&+ F_a^{(1)}(x) \Bigl(V^{(1),n}(x) - \bar{u}^n(x)\Bigr) + F_b^{(1)}(x) \Bigl(V^{(1),n}(x+\Dx) - \bar{u}^n(x+\Dx)\Bigr) \Biggr| dx \\
= &\ \int_{-\infty}^{Z^{1}(t^n)} \Biggl| \sum_{k=2}^K \biggl[F_a^{(k,k-1)}(x) \left(V^{(k),n}(x) - u^{(k-1)} \right) + F_b^{(k,k-1)}(x) \left(V^{(k),n}(x+\Dx) - u^{(k-1)} \right)\biggr] \\
&- F_a^{(1)}(x) \sum_{k=2}^K \left(V^{(k),n}(x) - u^{(k-1)} \right) - F_b^{(1)}(x) \sum_{k=2}^K\left(V^{(k),n}(x+\Dx) - u^{(k-1)} \right) \Biggr| dx \\
= &\ \int_{-\infty}^{Z^{1}(t^n)} \Biggl| \sum_{k=2}^K\biggl[ \left(F_a^{(k,k-1)}(x) - F_a^{(1)}(x)\right) \left(V^{(k),n}(x) - u^{(k-1)} \right) \\
& \qquad\qquad\quad + \left(F_b^{(k,k-1)}(x) - F_b^{(1)}(x) \right)\left(V^{(k),n}(x+\Dx) - u^{(k-1)} \right) \biggr]\Biggr| dx
\end{align*}
All values of $\bar{u}$, $V^{(k)}$ and $u^{(k)}$ lie in the bounded interval $\bigl[u^{(K)},\, u^{(0)}\bigr]$. Thus, $F_a^{(k,l)}(x)$, $F_b^{(k,l)}(x)$, $F_a^{(s)}(x)$ and $F_b^{(s)}(x)$ are bounded for all $k=1, \dots, K$. Therefore,
\[
\mathcal{J}^{(0),n} \leq \sum_{k=2}^K C_k \int_{-\infty}^{Z^{1}(t^n)}\left| V^{(k),n}(x) - u^{(k-1)}\right| + \left|V^{(k),n}(x+\Dx) - u^{(k-1)} \right| dx.
\]
Since $Z^{1}(t^n)\leq X^k(t^n)$ for $k=2,\dots,K$ for all $t^n\leq t_{(1)}$, we can apply \eqref{eq:exponentialdecay2A} to both terms in the above integrand and obtain
\begin{align*}
&\mathcal{J}^{(0),n} \leq \sum_{k=2}^K C_k \beta_k \int_{-\infty}^{Z^{1}(t^n)} \exp\left(-\alpha_k\left|\frac{x-X^{k}(t^n)}{\Dx}\right|\right) + \exp \left(-\alpha_k\left|\frac{x+\Dx-X^{k}(t^n)}{\Dx}\right|\right) dx \\
&= \sum_{k=2}^K C_k \beta_k \Biggl[\int_{-\infty}^{Z^{1}(t^n)} \exp\left(-\alpha_k\left|\frac{x-X^{k}(t^n)}{\Dx}\right|\right) dx \\
&\qquad\qquad + \int_{-\infty}^{Z^{1}(t^n)} \exp \left(-\alpha_k\left|\frac{x-X^{k}(t^n)}{\Dx}\right|\right) dx + \int_{Z^{1}(t^n)}^{Z^{1}(t^n)+\Dx} \exp \left(-\alpha_k\left|\frac{x-X^{k}(t^n)}{\Dx}\right|\right) dx \\
&= \Dx\sum_{k=2}^K C_k\beta_k\Biggl[\frac{2}{\alpha_k}\exp \left(-\alpha_k\left|\frac{Z^1(t^n)-X^{k}(t^n)}{\Dx}\right|\right) + \exp \left(-\alpha_k\left|\frac{Z^1(t^n)-X^{k}(t^n)}{\Dx}+\theta^{(0),n}_k\right|\right)\Biggr]
\end{align*}
for some $\theta^{(0),n}_k\in[0,1]$, by the mean value theorem applied to the last integral. By a similar argument we get
\[
\mathcal{J}^{(K-1),n} \leq \Dx\sum_{k=1}^{K-1} C_k\beta_k \frac{2}{\alpha_k}\exp \left(-\alpha_k\left|\frac{Z^{K-1}(t^n)-X^{k}(t^n)}{\Dx}\right|\right)
\]

Next, let us look at the intermediate intervals. The approach is similar to the above. We start by splitting the sum in a specific way,
\begin{align*}
\mathcal{J}^{(s),n} =&\ \int_{Z^{s}(t^n)}^{Z^{s+1}(t^n)}  \left|F\bigl(\bar{u}^n(x),\bar{u}^n(x+\Dx)\bigr) - \sum_{k=1}^K F\bigl(V^{(k),n}(x),V^{(k),n}(x+\Dx)\bigr) + \sum_{k=1}^{K-1}f\bigl(u^{(k)}\bigr) \right| dx \\
=&\ \int_{Z^{s}(t^n)}^{Z^{s+1}(t^n)} \left| \sum_{k>s+1}^K \biggl[ F_a^{(k,k-1)}(x)\left(V^{(k),n}(x) - u^{(k-1)} \right) + F_b^{(k,k-1)}(x) \left(V^{(k),n}(x+\Dx) - u^{(k-1)} \right) \biggr] \right. \\
& + \sum_{k<s+1} \biggl[ F_a^{(k,k)}(x) \left(V^{(k),n}(x) - u^{(k)} \right) + F_b^{(k,k)}(x) \left(V^{(k),n}(x+\Dx) - u^{(k)} \right)  \biggr] \\
& + F_a^{(s+1)}(x)\left(V^{(s+1),n}(x) - \bar{u}^n(x) \right) + F_b^{(s+1)}(x) \left(V^{(s+1),n}(x+\Dx) - \bar{u}^n(x+\Dx) \right) \Biggr| dx \\
=&\ \int_{Z^{s}(t^n)}^{Z^{s+1}(t^n)} \Biggl| \sum_{k>s+1}^K \biggl[ \left( F_a^{(k,k-1)}(x)-F_a^{(s+1)}(x) \right)\left(V^{(k),n}(x) - u^{(k-1)} \right)  \\
& \qquad\qquad\qquad+ \left(F_b^{(k,k-1)}(x)-F_b^{(s+1)}(x) \right)\left(V^{(k),n}(x+\Dx) - u^{(k-1)} \right) \biggr] \\
& + \sum_{k<s+1} \biggl[\left(F_a^{(k,k)}(x)-F_a^{(s+1)}(x) \right)\left(V^{(k),n}(x) - u^{(k)} \right)  \\
& \qquad+ \left(F_b^{(k,k)}(x)-F_b^{(s+1)}(x)\right)\left(V^{(k),n}(x+\Dx) - u^{(k)} \right) \biggr] \Biggr| dx \\
\leq&\ \int_{-\infty}^{Z^{s+1}(t^n)} \Biggl| \sum_{k>s+1}^K \left( F_a^{(k,k-1)}(x)-F_a^{(s+1)}(x) \right)\left(V^{(k),n}(x) - u^{(k-1)} \right) \\
&  \qquad\qquad\qquad+\left(F_b^{(k,k-1)}(x)-F_b^{(s+1)}(x)\right)\left(V^{(k),n}(x+\Dx) - u^{(k-1)} \right) \Biggr| dx  \\
& + \int_{Z^{s}(t^n)}^{\infty} \Biggl|\sum_{k<s+1} \left(F_a^{(k,k)}(x)-F_a^{(s+1)}(x)\right)\left(V^{(k),n}(x) - u^{(k)} \right) \\
& \qquad\qquad\qquad+ \left(F_b^{(k,k)}(x)-F_b^{(s+1)}(x) \right)\left(V^{(k),n}(x+\Dx) - u^{(k)} \right)  \Biggr| dx \\
=&\ \mathcal{J}^{(s),n}_1 + \mathcal{J}^{(s),n}_2.
\end{align*}
For $\mathcal{J}^{(s),n}_2$ we have
\begin{align*}
\mathcal{J}^{(s),n}_2 & \leq \sum_{k<s+1}C_k\int_{Z^{s}(t^n)}^{\infty} \left| V^{(k),n}(x) - u^{(k)} \right| + \left| V^{(k),n}(x+\Dx) - u^{(k)} \right| dx
\end{align*}
Since $Z^{s}(t^n) \geq X^{k}(t^n)$ for $k=1,\dots,s$, we can use \eqref{eq:exponentialdecay2B} to conclude that
\begin{align*}
\mathcal{J}^{(s),n}_2 \leq 2\Dx\sum_{k<s+1} C_k \frac{\beta_k}{\alpha_k} \exp \left(-\alpha_k\frac{Z^{s}(t^n)-X^{k}(t^n)}{\Dx}\right).
\end{align*}
Similarly to the estimate of $\mathcal{J}^{(0),n}$, we use the fact that $X^{k}(t^n) \geq Z^{s+1}(t^n)$ for $k>s+1$ to get
\begin{align*}
\mathcal{J}^{(s),n}_1 \leq \Dx\sum_{k>s+1}^K C_k\beta_k\biggl[\frac{2}{\alpha_k} \exp \left(-\left|\alpha_k\frac{Z^{s+1}(t^n)-X^{k}(t^n)}{\Dx}\right|\right) + \exp\left(-\alpha_k\left|\frac{Z^{s+1}(t^n) -X^{k}(t^n)}{\Dx}+\theta_k^{(s),k} \right|\right)\biggr],
\end{align*}
for some $\theta_k^{(s),k}\in[0,1]$.

We now need to sum up the error of these integrals in each timestep. We have
\begin{align*}
\Dt \sum_{n=0}^{N-1} W_{1,D}\bigl(h(x,t^n), 0\bigr) &\leq \Dt \sum_{n=0}^{N-1} \sum_{s=0}^{K-1} \mathcal{J}^{(s),n} \\
& \leq \Dt \sum_{n=0}^{N-1} \left( \mathcal{J}^{(0),n} + \mathcal{J}^{(K-1),n} + \sum_{s=1}^{K-2} \mathcal{J}^{(s),n}_1 + \mathcal{J}^{(s),n}_2 \right).
\end{align*}
Let 
\begin{align*}
t_{s,k}=\frac{x^{k}-x^{s}}{D^s-D^k},
\end{align*}
which is the interaction time of two shocks, and recall that $t^{N-1} < t_{(1)} \leq t_{s,k}$ and $Z^{s+1}(t^n) < X^{k}(t^n)$ for $k=s+2,\dots, K$. We estimate $\mathcal{J}^{(s)}_1$ as follows,
\begin{align*}
&\Dt \sum_{n=0}^{N-1} \mathcal{J}^{(s),n}_1 \\
&\leq \Dt\Dx \sum_{n=0}^{N-1}\sum_{k>s+1}^KC_k\beta_k\Biggl[ \frac{2}{\alpha_k} \exp \left(-\alpha_k\left|\frac{Z^{s+1}(t^n)-X^{k}(t^n)}{\Dx}\right|\right) + \exp\left(-\alpha_k\left|\frac{Z^{s+1}(t^n) -X^{k}(t^n)}{\Dx}+\theta^{(s),n}_k \right|\right)\Biggr] \\
&\leq \Dx\sum_{k>s+1}^K C_k\beta_k\int_0^{t_{s+1,k}}\Biggl[\frac{2}{\alpha_k} \exp \left(-\alpha_k\left|\frac{Z^{s+1}(t)-X^{k}(t)}{\Dx}\right|\right) + \exp\left(-\alpha_k\left|\frac{Z^{s+1}(t) -X^{k}(t)}{\Dx}+\theta^{(s),n}_k \right|\right)\Biggr] dt \\
&\leq \Dx\sum_{k>s+1}^K C_k\beta_k\Biggl[\frac{2\Dx}{\alpha_k^2(D^{s+1}+D^{s+2}-2D^k)} +  \frac{\Dx}{\alpha_k(D^{s+1}+D^{s+2}-2D^k)}\Biggr]\\
&= O(\Dx^2). 
\end{align*}
By similar treatment of the other integrals, we get \eqref{eq:err_h_disc}.
\end{proof}

\subsection{Error after shock interaction}\label{sec:after_interaction}
We can now conclude the proof of the Main Theorem.
\begin{proof}[Proof of Main Theorem \ref{thm:main}]
To conclude that \eqref{eq:main_conv_res} holds for all timesteps $t^n$ and for any number of shocks, we use induction on $K$. We showed in Theorem \ref{thm:singleshockconv} that Theorem \ref{thm:main} holds for $K=1$. We assume that Theorem~\ref{thm:main} holds for initial data with at most $K-1 \geq 1$ shocks. Let $u_0$ have $K$ initial shocks and fix $N\in\N$ such that $t^N < t_{(1)} \leq t^{N+1}$. By Lemma \ref{lem:kshockerror}, the result is true up to time $t^n=t^N$, so it suffices to consider times $t^n\geq t^{N+1}$.

Let $v_\Dx^{N+1}(x, t)$ be the numerical solution with initial data $v_0^{N+1}(x)=u(x,t^{N+1})$ and let $v_\Dx^N(x, t)$ be the numerical solution with initial data $v_0^N(x)=u(x,t^N)$. By the induction hypothesis we have
\[
W_1\Bigl(u\bigl(\cdot, t^{N+1}+t^m\bigr), v_\Dx^{N+1}\bigl(\cdot, t^m\bigr)\Bigr) \leq C_{K-1} \Dx^2
\]
for any $m\geq0$, and by the stability Theorem \ref{thm:stability}, we have
\[
W_1\Bigl(v_\Dx^{N+1}(\cdot, t^m), u_\Dx(\cdot, t^{N+1} + t^m) \Bigr) \leq W_{1,D}\Bigl(v_\Dx^{N+1}(\cdot, 0), u_\Dx(\cdot, t^{N+1}) \Bigr).
\] 
Thus, the error at time $t^{N+1}+t^m$ is
\begin{align*}
W_1\Bigl(&u(\cdot, t^{N+1}+t^m), u_\Dx(\cdot, t^{N+1} + t^m) \Bigr) \\*
&\leq W_1\Bigl(u(\cdot, t^{N+1}+t^m), v_\Dx^{N+1}(\cdot, t^m) \Bigr) + W_1\Bigl(v_\Dx^{N+1}(\cdot, t^m), u_\Dx(\cdot, t^{N+1} + t^m) \Bigr) \\
&\leq C_{K-1}\Dx^2 + W_{1,D}\Bigl(v_\Dx^{N+1}(\cdot, 0), u_\Dx(\cdot, t^{N+1}) \Bigr) \\
&\leq C_{K-1}\Dx^2 + \underbrace{W_{1,D}\Bigl(v_\Dx^{N+1}(\cdot, 0), v_\Dx^N(\cdot, \Dt) \Bigr)}_{=\mathcal{E}_1} + \underbrace{W_{1,D}\Bigl(u^N_{\Dx}(\cdot, \Dt), u_\Dx(\cdot, t^{N+1}) \Bigr)}_{=\mathcal{E}_2}.
\end{align*}
Let $v_i^N$ and $v_i^{N+1}$ be grid values so that $v_\Dx^N(x,0) = \sum_i v_i^N\ind_{I_i}(x)$ and $v_\Dx^{N+1}(x,0) = \sum_i v_i^{N+1}\ind_{I_i}(x)$. Since
\[
v_i^{N+1} = \frac{1}{\Dx} \int_{I_i} u(x,t^{N+1})\ dx = v_i^N - \frac{1}{\Dx}\int_{0}^{\Dt} f(u(x_{i+\hf},t^N+t)) - f(u(x_{i-\hf},t^N+t)) \ dt,
\]
we get
\begin{align*}
\mathcal{E}_1 =&\ \sup_{\| \phi \|_\DLip\leq 1}\Dx \sum_i \phi_i \left( v_i^{N+1}-v_i^N + \frac{\Dt}{\Dx}\left(F(v_i^N,v_{i+1}^N) - F(v_{i-1}^N,v_i^N) \right) \right) \\
= &\ \sup_{\| \phi_{\Dx} \|_\DLip\leq 1}\Dx \sum_i \phi_i \left( -\frac{1}{\Dx} \int_{0}^{\Dt} f(u(x_{i+\hf},t^N+t)) - f(u(x_{i-\hf},t^N+t)) \ dt \right.\\
& \left. + \frac{\Dt}{\Dx}\left(F(v_i^N,v_{i+1}^N) - F(v_{i-1}^N,v_i^N) \right)\right) \\
= &\ \sup_{\| \phi_{\Dx} \|_\DLip\leq 1} \sum_i (\phi_{i+1}-\phi_i)\left(\int_0^{\Dt} f(u(x_{i+\hf},t^N+t)) - F(v_i^N,v_{i+1}^N) \ dt \right) \\
=&\ \Dx \sum_i \left| \int_0^{\Dt} f(u(x_{i+\hf},t^N+t)) - F(v_i^N,v_{i+1}^N) \ dt \right|.
\end{align*}
Denote by $\mathcal{S}\subset\Z$ the set of indices $i$ where $v_i^N\neq v_{i+1}^N$. This set has at most $2K$ elements, so
\begin{align*}
\mathcal{E}_1 =&\ \Dx \sum_{i\in\mathcal{S}} \left| \int_0^{\Dt} f(u(x_{i+\hf},t^N+t)) - F(v_{i}^N,v_{i+1}^N) \ dt \right|\\
&+ \Dx \sum_{i\notin\mathcal{S}} \left| \int_0^{\Dt} f(u(x_{i+\hf},t^N+t)) - F(v_{i}^N,v_{i}^N) \ dt \right| \\
\leq&\ \Dx \Dt C{2K} + \Dx \sum_{i\notin\mathcal{S}} \left| \int_0^{\Dt} f(u(x_{i+\hf},t^N+t)) - f(v_{i}^N) \ dt \right| \\
= &\ \Dx \Dt C{2K},
\end{align*}
the last equality following from the fact that  $u(x_{i+\hf},t^N+t) = v_{i}^N$ for $0\leq t < \Dt$ for all $i \notin \mathcal{S}$. Moreover, by Theorem \ref{thm:stability} and the proof of Lemma \ref{lem:kshockerror},
\begin{align*}
\mathcal{E}_2 &\leq W_{1,D}\bigl(v_\Dx^N(\cdot, 0), u_\Dx(\cdot, t^N)\bigr) \\
&\leq W_{1,D}\bigl(v_\Dx^N(\cdot, 0), \bar{u}(\cdot, t^N)\bigr) + W_{1,D}\bigl(\bar{u}(\cdot, t^N), u_\Dx(\cdot, t^N)\bigr) \leq C\Dx^2.
\end{align*}
We can conclude that 
\begin{align*}
W_1\bigl(u(\cdot, t^n), u_{\Dx}(\cdot, t^n) \bigr) \leq C_K \Dx^2
\end{align*}
holds for \emph{all} $t^n$, thus finishing the proof of Theorem \ref{thm:main}.
\end{proof}

\section{Numerical experiments}
\label{sec:numerical}
To illustrate the main theorem, we look at a numerical approximation of Burgers' equation on the interval $[0,1]$, 
\begin{align}\label{eq:burgers}
u_t + \left(\frac{u^2}{2}\right)_x = 0,
\end{align}
with initial data containing two jumps,
\begin{align}\label{eq:numinitialcond}
u_0(x) =\begin{cases}
    2       & \quad x<0.25\\
    1       & \quad 0.25 \leq x < 0.5 \\
    0       & \quad x \geq 0.5.
  \end{cases}
\end{align}
We use the Godunov scheme, i.e.\ the monotone scheme \eqref{eq:num_met} with Godunov flux function \eqref{eq:godflux}, and a CFL number of $0.3$. The exact solution is
\begin{align*}
u(x,t) =\begin{cases}
    2       & \quad x<0.25+1.5t\\
    1       & \quad 0.25+1.5t \leq x < 0.5+0.5t \\
    0       & \quad x \geq 0.5+0.5t \\
  \end{cases}
\end{align*}
for $t<0.25$ and 
\begin{align*}
u(x,t) =\begin{cases}
    2       & \quad x<3/8+t\\
    0       & \quad x \geq 3/8+t \\
  \end{cases}
\end{align*}
for $t\geq0.25$. At $t=0.25$ the two shocks interact. The initial condition is plotted in Figure \ref{fig:example}(a). In Figure \ref{fig:example}(b)--(c) we see the exact solution and the numerical approximation before ($t=0.15$) and after ($t=0.3$) shock interaction. Tables \ref{tab:god_before} and \ref{tab:god_after} show the observed rate of convergence of the numerical approximation before and after the shock interaction; it is clear that the $W_1$ error is $O(\Dx^2)$, as claimed. The $L^1$ error is $O(\Dx)$, as was shown in \cite{zhang_teng}.

\begin{figure}
\centering
\subfigure[Initial condition.]{\includegraphics[width=0.3\textwidth]{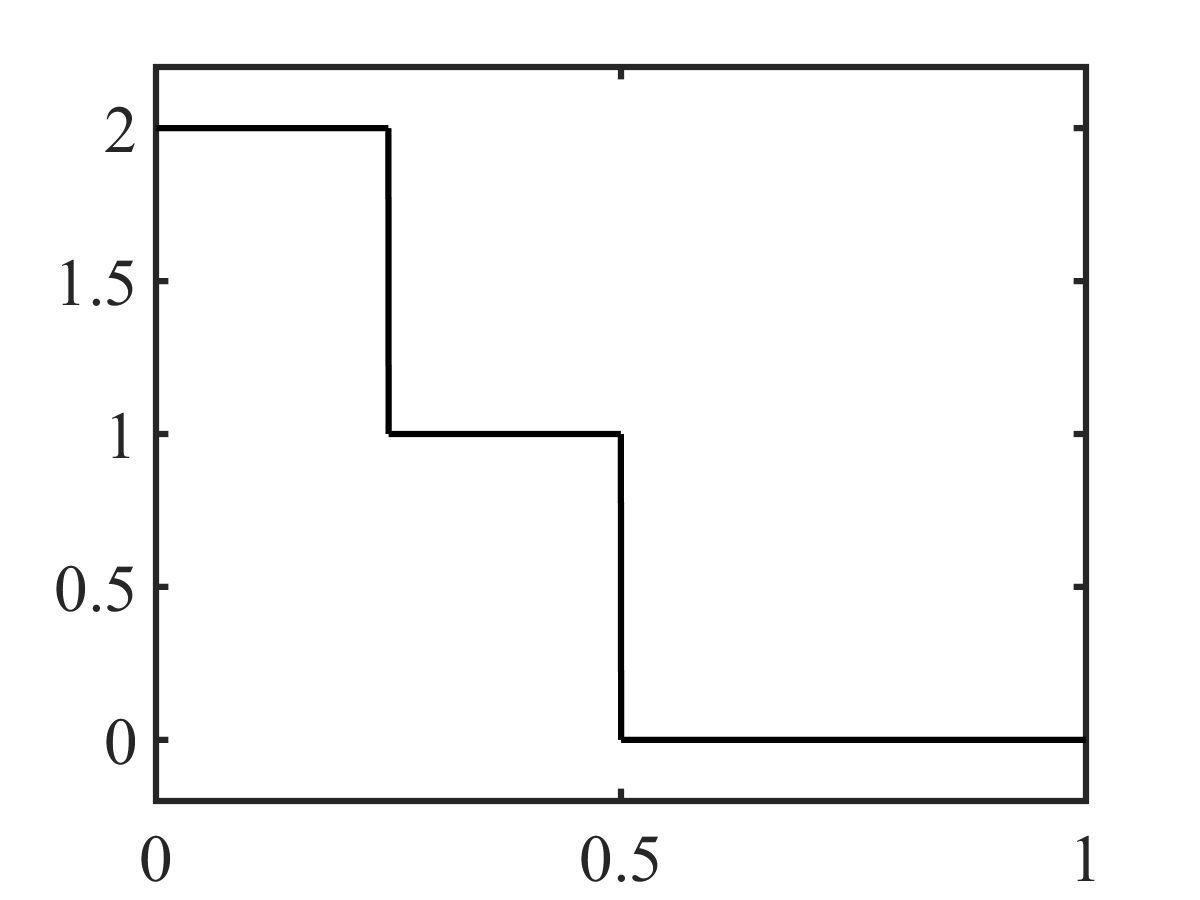}}
\subfigure[Solution at $t=0.15$.]{\includegraphics[width=0.3\textwidth]{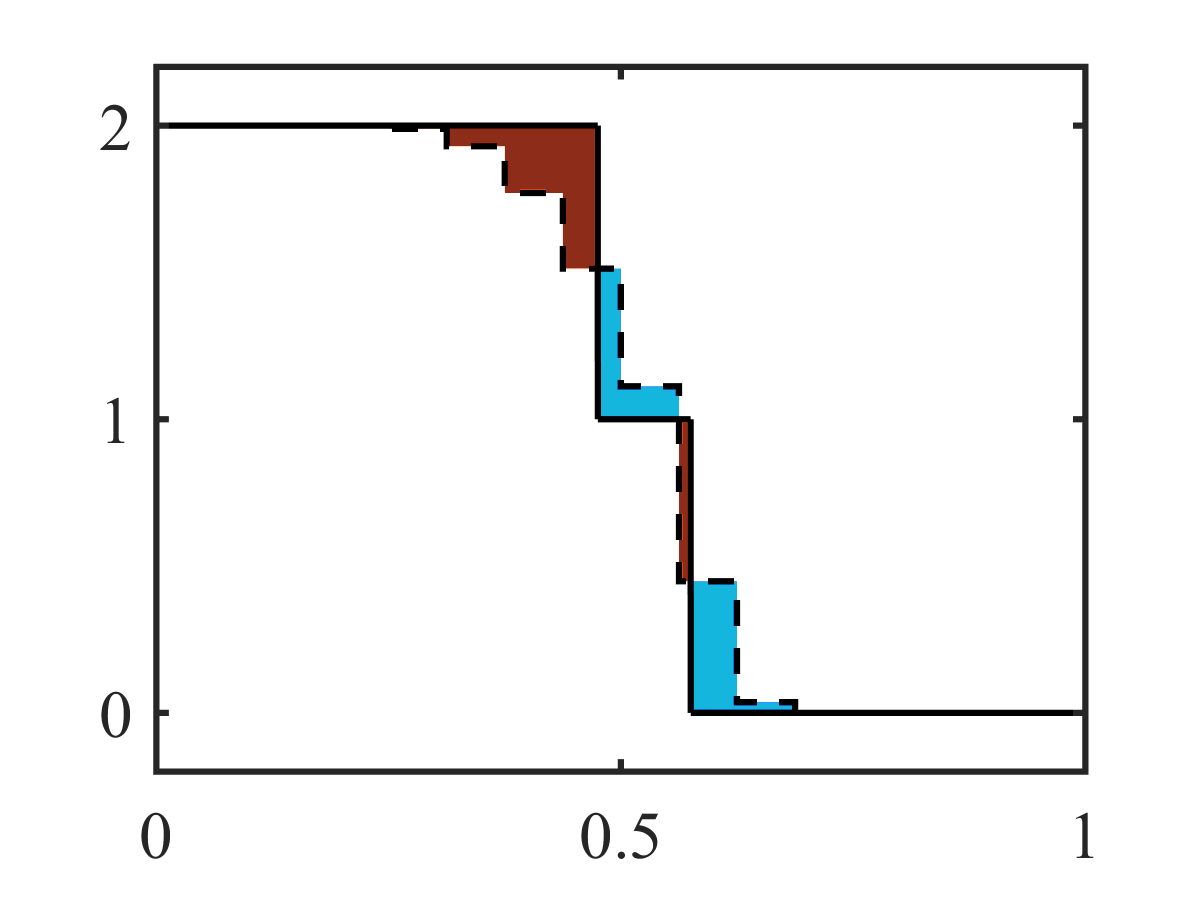}}
\subfigure[Solution at $t=0.3$.]{\includegraphics[width=0.3\textwidth]{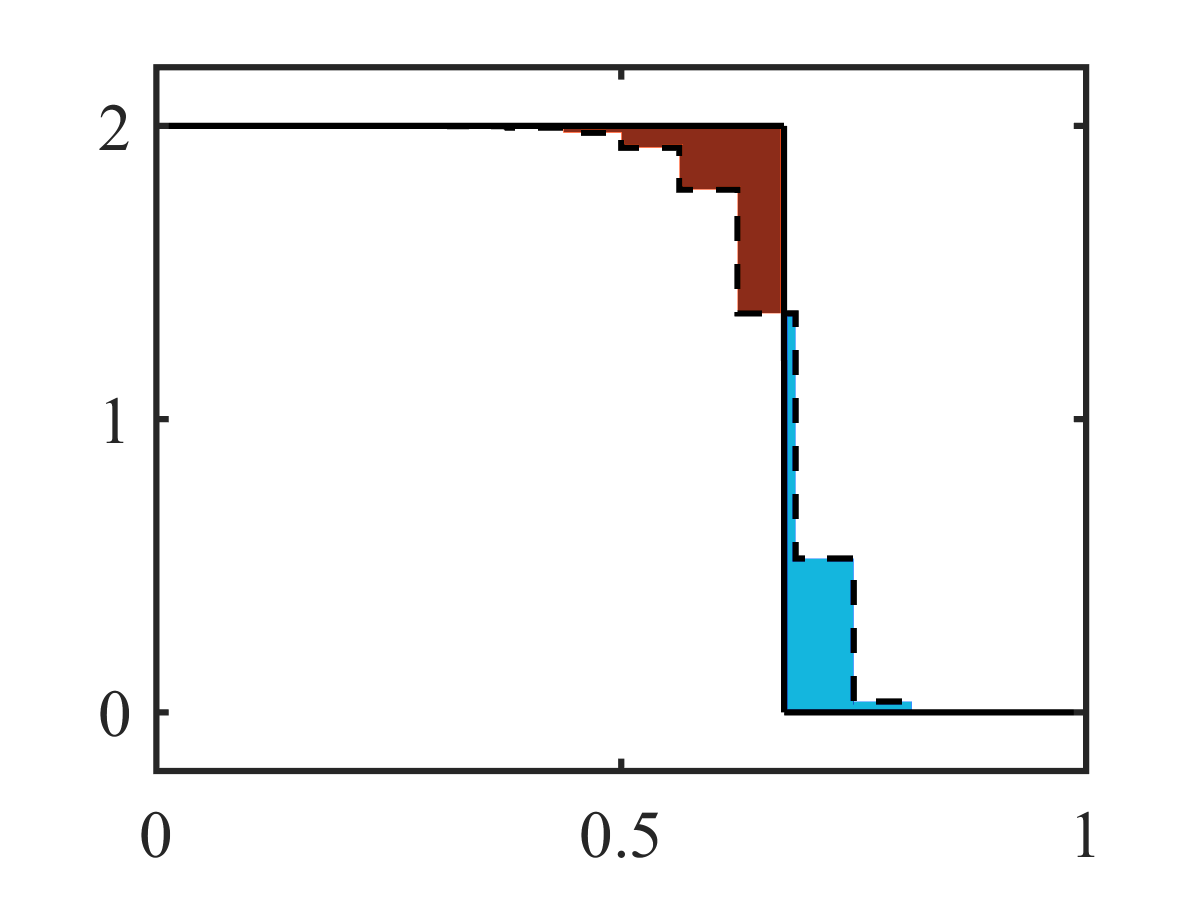}}
\caption{The initial condition \eqref{eq:numinitialcond} and the exact solution (solid curve) and numerical approximation (dashed curve) of \eqref{eq:burgers} before and after shock interaction. The color coding is the same as in Figure \ref{fig:wasserstein}.}
\label{fig:example}
\end{figure}

\begin{table}
\centering
\begin{tabular}{|c|c |c |c | c|}
\hline 
$ n $ & $L^1$ & $L^1$ OOC & $W_1$ & $W_1$ OOC \\ 
 \hline 
$ 32 $ & $ 4.078\times 10^{-2} $ & & $ 1.775\times 10^{-3} $ &  \\ 
$ 64 $ & $ 2.735\times 10^{-2} $ & $ 0.577 $ & $ 6.523\times 10^{-4} $ & $ 1.445 $ \\ 
$ 128 $ & $ 1.604\times 10^{-2} $ & $ 0.770 $ & $ 2.063\times 10^{-4} $ & $ 1.661 $ \\ 
$ 256 $ & $ 8.478\times 10^{-3} $ & $ 0.920 $ & $ 5.699\times 10^{-5} $ & $ 1.856 $ \\ 
$ 512 $ & $ 4.419\times 10^{-3} $ & $ 0.940 $ & $ 1.452\times 10^{-5} $ & $ 1.973 $ \\ 
$ 1024 $ & $ 2.121\times 10^{-3} $ & $ 1.059 $ & $ 3.632\times 10^{-6} $ & $ 1.999 $ \\ 
$ 2048 $ & $ 1.060\times 10^{-3} $ & $ 1.001 $ & $ 9.081\times 10^{-7} $ & $ 2.000 $ \\ 
$ 4096 $ & $ 5.341\times 10^{-4} $ & $ 0.989 $ & $ 2.270\times 10^{-7} $ & $ 2.000 $ \\ 
\hline 
\end{tabular}
\caption{Convergence rates for the Godunov scheme before the shock interaction, $t=0.15$}
\label{tab:god_before}
\end{table}

\begin{table}
\centering
\begin{tabular}{|c|c |c |c | c|}
\hline 
$ n $ & $L^1$ & $L^1$ OOC & $W_1$ & $W_1$ OOC \\ 
 \hline 
$ 32 $ & $ 2.848\times 10^{-2} $ & & $ 8.644\times 10^{-4} $ &  \\ 
$ 64 $ & $ 1.986\times 10^{-2} $ & $ 0.520 $ & $ 2.208\times 10^{-4} $ & $ 1.969 $ \\ 
$ 128 $ & $ 6.780\times 10^{-3} $ & $ 1.550 $ & $ 3.955\times 10^{-5} $ & $ 2.481 $ \\ 
$ 256 $ & $ 3.646\times 10^{-3} $ & $ 0.895 $ & $ 8.788\times 10^{-6} $ & $ 2.170 $ \\ 
$ 512 $ & $ 1.176\times 10^{-3} $ & $ 1.632 $ & $ 1.892\times 10^{-6} $ & $ 2.215 $ \\ 
$ 1024 $ & $ 9.863\times 10^{-4} $ & $ 0.254 $ & $ 5.291\times 10^{-7} $ & $ 1.838 $ \\ 
$ 2048 $ & $ 3.710\times 10^{-4} $ & $ 1.411 $ & $ 1.182\times 10^{-7} $ & $ 2.163 $ \\ 
$ 4096 $ & $ 2.255\times 10^{-4} $ & $ 0.718 $ & $ 3.308\times 10^{-8} $ & $ 1.837 $ \\ 
\hline 
\end{tabular}
\caption{Convergence rates for the Godunov scheme after the shock interaction, $t=0.3$}
\label{tab:god_after}
\end{table}

Tables \ref{tab:secondeno} and \ref{tab:thirdeno} show the convergence rate of the second- and third-order ENO-schemes using third-order Runge-Kutta time integration and, in space, the Godunov flux with second- and third-order ENO reconstructions at the cell boundaries. Although the existence of discrete shocks has not been proven for higher-order schemes such as the second- and third-order ENO-schemes, we also observe a first-order convergence rate in the $L^1$-norm and a second-order rate in the $W_1$-distance, as shown in Tables \ref{tab:secondeno} and \ref{tab:thirdeno}.

\begin{table}
\centering
\begin{tabular}{|c|c |c |c | c|}
\hline 
$ n $ & $L^1$ & $L^1$ OOC & $W_1$ & $W_1$ OOC \\ 
 \hline 
$ 32 $ & $ 2.125\times 10^{-2} $ & & $ 5.080\times 10^{-4} $ &  \\ 
$ 64 $ & $ 1.032\times 10^{-2} $ & $ 1.042 $ & $ 1.480\times 10^{-4} $ & $ 1.779 $ \\ 
$ 128 $ & $ 5.307\times 10^{-3} $ & $ 0.960 $ & $ 3.824\times 10^{-5} $ & $ 1.953 $ \\ 
$ 256 $ & $ 2.604\times 10^{-3} $ & $ 1.027 $ & $ 9.684\times 10^{-6} $ & $ 1.982 $ \\ 
$ 512 $ & $ 1.492\times 10^{-3} $ & $ 0.804 $ & $ 2.432\times 10^{-6} $ & $ 1.994 $ \\ 
$ 1024 $ & $ 6.553\times 10^{-4} $ & $ 1.187 $ & $ 5.965\times 10^{-7} $ & $ 2.027 $ \\ 
$ 2048 $ & $ 3.319\times 10^{-4} $ & $ 0.981 $ & $ 1.496\times 10^{-7} $ & $ 1.995 $ \\ 
$ 4096 $ & $ 1.628\times 10^{-4} $ & $ 1.028 $ & $ 3.783\times 10^{-8} $ & $ 1.984 $ \\ 
\hline 
\end{tabular}
\caption{Convergence rates for the second-order ENO scheme with Godunov flux at $t=0.15$}
\label{tab:secondeno}
\end{table}
\begin{table}
\centering
\begin{tabular}{|c|c |c |c | c|}
\hline 
$ n $ & $L^1$ & $L^1$ OOC & $W_1$ & $W_1$ OOC \\ 
 \hline 
$ 32 $ & $ 1.568\times 10^{-2} $ & & $ 3.454\times 10^{-4} $ &  \\ 
$ 64 $ & $ 6.516\times 10^{-3} $ & $ 1.267 $ & $ 8.128\times 10^{-5} $ & $ 2.087 $ \\ 
$ 128 $ & $ 3.528\times 10^{-3} $ & $ 0.885 $ & $ 2.104\times 10^{-5} $ & $ 1.950 $ \\ 
$ 256 $ & $ 1.696\times 10^{-3} $ & $ 1.056 $ & $ 5.286\times 10^{-6} $ & $ 1.993 $ \\ 
$ 512 $ & $ 9.825\times 10^{-4} $ & $ 0.788 $ & $ 1.329\times 10^{-6} $ & $ 1.992 $ \\ 
$ 1024 $ & $ 4.078\times 10^{-4} $ & $ 1.269 $ & $ 3.186\times 10^{-7} $ & $ 2.061 $ \\ 
$ 2048 $ & $ 2.205\times 10^{-4} $ & $ 0.887 $ & $ 8.219\times 10^{-8} $ & $ 1.955 $ \\ 
$ 4096 $ & $ 1.060\times 10^{-4}$ & $ 1.056 $ & $ 2.065\times 10^{-8} $ & $ 1.993 $ \\ 
\hline 
\end{tabular}
\caption{Convergence rates for the third-order ENO scheme with Godunov flux at $t=0.15$}
\label{tab:thirdeno}
\end{table}

\section{Conclusion and outlook}
\label{sec:conclusion}
With decreasing initial data consisting of a finite number of piecewise constants, we show that any monotone, $W_1$-contractive finite volume scheme will converge to the exact solution of \eqref{eq:cons_law} at a rate of $\Dx^2$ in the Wasserstein distance, both before and after shock interaction. The proof of the main result relies on the existence of discrete shocks and on the $W_1$-contractivity of the scheme. In Section \ref{sec:W1contractive} we show that the Lax--Friedrichs, Engquist--Osher and Godunov schemes are $W_1$-contractive, but a general result for all monotone schemes is ongoing work.

In addition to illustrating the main theorem in this paper, the numerical results in Section \ref{sec:numerical} show a second-order convergence rate in $W_1$ for both the second- and third-order ENO schemes in the MUSCL formulation. Fan \cite{fan00} has established existence of discrete shocks (for $D^k\lambda$ rational) for certain second-order MUSCL schemes. This suggests that it might be possible to prove existence of discrete shocks for a more general class of schemes and thus extend the main result of this paper. 

Our analysis only applies to monotone schemes, and to an admittedly simple class of entropy solutions. Nonetheless, the main result, together with the numerical results for the higher-order ENO schemes, strongly suggest that measuring the approximation error in the $W_1$-distance may be the path to proving higher-order convergence rates. However, numerical evidence show that the $O(\Delta x)$ convergence rate obtained by Nessyahu, Tadmor and Tassa \cite{nessy_conv} is optimal for general $\Lip^+$-bounded initial data. Hence, it is necessary to use (formally) higher-order methods in order to tackle general initial data. The $W_1$-convergence rates of (formally) second- and higher-order schemes for more general initial data will be investigated further. 

Finally, we mention that although the $W_1$-distance is easily defined also in several (spatial) dimensions, there are currently no results on the $W_1$-convergence of schemes for multi-dimensional conservation laws.


\begin{thebibliography}{10}

\bibitem{CG96}
B.~Cockburn and P.-A. Gremaud.
\newblock {A priori error estimates for numerical methods for scalar
  conservation laws. Part I: The general approach}.
\newblock {\em Math. Comp.}, 65:533--573, 1996.

\bibitem{enquist81}
B.~Enquist and S.~Osher.
\newblock {One-side difference approximations for nonlinear conservation laws}.
\newblock {\em Math. Comp.}, 36:321--351, 1981.

\bibitem{fan_98}
H.~Fan.
\newblock {Existence and uniqueness of traveling waves and error estimates for
  {G}odunov schemes of conservation laws}.
\newblock {\em Math. Comp.}, 67:87--109, 1998.

\bibitem{fan00}
H.~Fan.
\newblock {Existence of discrete shock profiles of a class of monotonicity
  preserving schemes for conservation laws}.
\newblock {\em Math. Comp.}, 70:1043--1069, 2000.

\bibitem{GR91}
E.~Godlewski and P.-A. Raviart.
\newblock {\em {Hyperbolic systems of conservation laws}}.
\newblock Ellipses, 1991.

\bibitem{jennings_74}
G.~Jennings.
\newblock {Discrete Shocks}.
\newblock {\em Comm. Pure Appl. Math.}, 27:25--37, 1974.

\bibitem{Kuz76}
N.~N. Kuznetsov.
\newblock {Accuracy of some approximate methods for computing the weak
  solutions of a first-order quasi-linear equation}.
\newblock {\em USSR Computational Mathematics and Mathematical Physics},
  16(6):105--119, 1976.

\bibitem{nessy_conv92}
H.~Nessyahu and E.~Tadmor.
\newblock {The convergence rate of approximate solutions for nonlinear scalar
  conservation laws}.
\newblock {\em SIAM J. Numer. Anal.}, 29:1505--1519, 1992.

\bibitem{nessy_conv}
H.~Nessyahu, E.~Tadmor, and T.~Tassa.
\newblock {The convergence rate of {G}odunov type schemes}.
\newblock {\em SIAM J. Numer. Anal.}, 31:1--16, 1994.

\bibitem{ness93}
H.~Nessyahu and T.~Tassa.
\newblock {Convergence Rate of Approximate Solutions to Conservation Laws with
  Initial Rarefactions}.
\newblock {\em SIAM J. Numer. Anal}, 31:628--654, 1993.

\bibitem{rachev_90}
S.~T. Rachev and R.~M. Shortt.
\newblock {Duality theorems for {K}antorovich-{R}ubinstein and {W}asserstein
  functionals}.
\newblock {\em Dissertationes Mathematicae}, 299, 1990.

\bibitem{Sab97}
F.~Sabac.
\newblock {The Optimal Convergence Rate of Monotone Finite Difference Methods
  for Hyperbolic Conservation Laws}.
\newblock {\em SIAM J. Numer. Anal.}, 34(6):2306--2318, 1997.

\bibitem{serre_04}
D.~Serre.
\newblock {L$^1$-stability of nonlinear waves in scalar conservation laws}.
\newblock In C.~Dafermos and E.~Feireisl, editors, {\em {Handbook of
  Differential Equations. Evolutionary Equations, vol. 1}}. Elsevier, 2004.

\bibitem{serre_07}
D.~Serre.
\newblock {Discrete shock profiles: existence and stability}.
\newblock In P.~Marcati, editor, {\em {Hyperbolic systems of balance laws}}.
  Springer-Verlag, 2007.

\bibitem{Tad91}
E.~Tadmor.
\newblock {Local Error Estimates for Discontinuous Solutions of Nonlinear
  Hyperbolic Equations}.
\newblock {\em SIAM J. Numer. Anal.}, 28(4):891--906, 1991.

\bibitem{zhang_teng}
Z.-H. Teng and P.~Zhang.
\newblock {Optimal {L}1-Rate of Convergence for The Viscosity Method and
  Monotone Scheme to Piecewise Constant Solutions with Shocks}.
\newblock {\em SIAM J. Numer. Anal.}, 34:959--978, 1997.

\bibitem{Vil03}
C.~Villani.
\newblock {\em {Topics in Optimal Transportation}}.
\newblock AMS, 2003.

\end{thebibliography}
\end{document}